\newcommand{\db}[1]{(\!({#1})\!)}
\numberwithin{equation}{section}
\newcommand{\N}{{\mathbb N}}
\newcommand{\Z}{{\mathbb Z}}
\newcommand{\C}{{\mathbb C}}
\newcommand{\wi}{i}
\newcommand{\wj}{j}
\newcommand{\wk}{k}
\newcommand{\wl}{l}
\newcommand{\wx}{x}
\newcommand{\wor}{r}
\newcommand{\ws}{s}
\newcommand{\wm}{m}
\newcommand{\wn}{n}
\renewcommand{\wp}{p}
\newcommand{\hei}{{\mathfrak h}}
\newcommand{\module}{M}
\renewcommand{\wr}{r}
\newcommand{\sN}{{\mathscr N}}
\newcommand{\sU}{{\mathscr U}}
\newcommand{\sW}{{\mathscr W}}
\newcommand{\wq}{q}
\newcommand{\nor}{\begin{subarray}{c}\circ\\\circ\end{subarray}}
\newcommand{\fg}{{\mathfrak g}}
\newcommand{\fh}{{\mathfrak h}}
\newcommand{\ul}[1]{{#1}}
\newcommand{\lw}{u}
\newcommand{\wv}{v}
\newcommand{\ulw}{v}
\newcommand{\vac}{{\mathbf 1}}
\newcommand{\wot}{t}
\newcommand{\sla}[1]{\mathcal M({#1})}
\DeclareMathOperator{\Res}{Res}
\DeclareMathOperator{\Span}{Span} 
\DeclareMathOperator{\wt}{wt}
\newtheorem{lemma}{Lemma}[section]
\newtheorem{theorem}[lemma]{Theorem}
\newtheorem{corollary}[lemma]{Corollary}
\theoremstyle{definition}
\newtheorem{remark}[lemma]{Remark}
\title{Simple weak modules for some subalgebras of the Heisenberg vertex algebra
and Whittaker vectors}
\author{Kenichiro Tanabe\footnote{This research was partially supported by JSPS Grant-in-Aid for Scientific Research No. 15K04770.}
\\\\
Department of Mathematics\\
Hokkaido University\\
Kita 10, Nishi 8, Kita-Ku, Sapporo, Hokkaido, 060-0810\\
Japan\\\\
ktanabe@math.sci.hokudai.ac.jp}
\date{}
\begin{document}
\maketitle

\numberwithin{equation}{section}

\abstract{
Let $\sla{p}$ $(p=2,3,\ldots)$ be the singlet vertex operator algebra
and $\omega$ its conformal vector. 
We classify the simple weak $\sla{p}$-modules with a non-zero element $ u$ such that
for some integer $s\geq 2$,  $\omega_i u\in{\mathbb C} u$ ($i=\lfloor s/2\rfloor+1,\lfloor s/2\rfloor+2,\ldots,s-1$), 
$\omega_{s} u\in{\mathbb C}^{\times} u$, and $\omega_i u=0$ for all $i>s$. 
}

\bigskip
\noindent{\it Mathematics Subject Classification.} 17B69

\noindent{\it Key Words.} vertex operator algebra, weak module, Whittaker vector.

\section{\label{section:introduction}Introduction}
The singlet vertex operator algebra $\sla{p}$ $(p=2,3,\ldots)$ is
a subalgebra of the Heisenberg vertex algebra $M(1)$ of rank $1$ with two generators,
and 
is known as $\sW(2,2p-1)$-algebra in the physics literature (cf. \cite[Section V]{BCNM}, \cite{Ka}).
Since $\sla{p}$ admits infinitely many non-isomorphic simple (irreducible) modules,
it is non-$C_2$-cofinite and non-rational.
The  vertex operator algebra $\sla{p}$ has been studied from various perspectives (cf. \cite{A},\cite{AM1},\cite{AM2},\cite{BM},\cite{CM},\cite{CMP},\cite{W1},\cite{W2}),
partly because of its connection to the triplet vertex algebra.
For the representation of $\sla{p}$,
Adamovi\'c \cite{A} classifies the simple $\sla{p}$-modules.
The notation $\overline{M(1)}$ is used there instead of $\sla{p}$.
So, one of the next task is to investigate  simple weak $\sla{p}$-modules.
The purpose of this paper is 
to classify the simple weak $\sla{p}$-modules with  a Whittaker vector for $\omega$ in the sense of 
\cite[(1.1)]{T1}.

Whittaker modules (Whittaker vectors) are non-weight modules defined over various Lie algebras, first appeared in \cite{AP} for $sl_{2}$.
Whittaker modules  for any 
finite dimensional complex simple Lie algebra are systematically studied in \cite{K1} 
and applied to the study of the Toda lattice in \cite{K2}.
Results in \cite{K2} are generalized to
affine Lie algebras or quantum groups in \cite{E} and \cite{S}.
Whittaker modules are also studied for the Virasoro algebra in \cite{OW}, \cite{LGZ} and \cite{FJK}, and
for the affine Kac--Moody algebra $A_1^{(1)}$ in \cite{ALZ}.
Whittaker modules for the Virasoro algebra also appear in the study of 
two-dimensional conformal field theory in physics(cf.\cite{G}).
Whittaker modules for a general vertex operator algebra $V$ are not defined, however, we note
that for the conformal vector (the Virasoro element) $\omega$ of $V$,
$\omega_{n+1}=L(n),n\in\Z$ satisfy the Virasoro algebra relations (cf. \cite[(1.3.4)]{LL}).
Thus, based on the definition of Whittaker vectors for the Virasoro algebra in \cite{OW} and \cite{LGZ},
I introduced the following notion in \cite{T1}:
for a weak $V$-module $M$, a non-zero element $\lw$ of $M$ is called a {\it Whittaker vector for $\omega$}  if 
there exists an integer $\ws$ with $\ws\geq 2$ and 
$\ul{\lambda}=
(\lambda_{\lfloor \ws/2\rfloor+1},\lambda_{\lfloor \ws/2\rfloor+2},\ldots,\lambda_{\ws})\in \C^{\ws-\lfloor \ws/2\rfloor}$
with $\lambda_{\ws}\neq 0$ such that
\begin{align}
\omega_{\wi}\lw=
\left\{
\begin{array}{ll}
\lambda_{\wi}\lw,& \wi=\lfloor\ws/2\rfloor +1,\lfloor\ws/2\rfloor +2,\ldots,\ws.\\
0,&\mbox{if  }i>\ws,
\end{array}\right.
\end{align}
where $\lfloor\ws/2\rfloor=\max\{i\in\Z\ |\ i\leq \ws/2\}$.
We call $\ul{\lambda}$ the {\it type} of $\lw$. 
We note that if we regard a weak $V$-module $M$ as a module for the Virasoro algebra,
then the Whittaker vectors in $M$ for $\omega$ coincide with the 
Whittaker vectors in $M$ for the Virasoro algebra.
It is well known that for each non-negative integer $r$ and
$\zeta=(\zeta_0,\ldots,\zeta_{r})\in\C^{r}\times \C^{\times}$,
the Heisenberg vertex algebra $M(1)$ has a simple weak module $M(1,\zeta)$ (See \eqref{eq:untwist-induced}),
which is also a Whittaker module for the Heisenberg algebra.
Let $\omega$ be the conformal vector of $\sla{p}$. 
As we shall see later in Corollary \ref{corollray:Mzeta},
for every $\zeta\in \C^{r}\times \C^{\times}$
the weak $\sla{p}$-module $M(1,\zeta)$ 
is simple and contains a Whittaker vector $u_{\zeta}$ of
type 
\begin{align}
\label{eq:whittaker-vector-type-intro}
\big(\dfrac{1}{4p}\sum_{\begin{subarray}{l}j,k\in\N\\j+k=i-1\end{subarray}}
\zeta_{j}\zeta_{k}-\delta_{i,r+1}\dfrac{(p-1)(r+1)}{2p}\zeta_{r}\big)_{i=r+1}^{2r+1}
\end{align}
for $\omega$.
The following is the main result of this paper, which implies that
the converse is true:
\begin{theorem}
\label{theorem:classification-module}
The set $\{M(1,\ul{\zeta})\ |\ r\in\Z_{\geq 1}, {\zeta}\in \C^{r}\times \C^{\times}\}$
is a complete set of representatives of equivalence classes of simple weak $\sla{p}$-modules with a Whittaker vector for $\omega$.
\end{theorem}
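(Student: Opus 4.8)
The forward implication is already available: by Corollary~\ref{corollray:Mzeta}, for every $\zeta\in\C^{r}\times\C^{\times}$ the module $M(1,\zeta)$ is a simple weak $\sla{p}$-module carrying a Whittaker vector $u_{\zeta}$ for $\omega$ of the type \eqref{eq:whittaker-vector-type-intro}. What remains is to prove two statements: \emph{completeness}, that every simple weak $\sla{p}$-module $M$ admitting a Whittaker vector $u$ for $\omega$ is isomorphic to some $M(1,\zeta)$; and \emph{distinctness}, that the modules in the displayed family are pairwise non-isomorphic, so that the family meets each equivalence class exactly once.

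For completeness, the plan is to exploit that $\sla{p}=\mathcal W(2,2p-1)$ is strongly generated by $\omega$ and a Virasoro-primary field $H$ of conformal weight $2p-1$; write $H(z)=\sum_{n}H_{n}z^{-n-1}$. Two relations drive the argument. First, because $H$ is primary of weight $2p-1$,
\[
[\omega_{m},H_{n}]=\big((2p-2)m-n\big)H_{m+n-1}.
\]
Second, for $\mathcal W(2,2p-1)$ the product $H(z)H(w)$ closes on the Virasoro vacuum submodule, so each $[H_{m},H_{n}]$ is a fixed normally ordered polynomial in the modes $\omega_{k}$. Let $u$ be a Whittaker vector of type $\lambda$ with respect to $s$, with the convention $\lambda_{m}=0$ for $m>s$. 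The lower-truncation axiom for weak modules gives $\omega_{n}u=0$ and $H_{n}u=0$ for all sufficiently large $n$. Feeding the Whittaker relations into the first commutator shows that $H_{n}u$, for $n$ in the top range, satisfies the same eigenvalue and annihilation conditions as $u$ modulo vectors of strictly higher $H$-mode; combined with the one-dimensionality of the space of type-$\lambda$ Whittaker vectors (the key lemma below), this forces $H_{n}u\in\C u$ for all large $n$, with well-defined scalars $\mu_{n}$. Thus $u$ is a joint Whittaker vector for the pair $(\omega,H)$. The iterated images $H_{n_{1}}\cdots H_{n_{k}}u$ span an $\sla{p}$-submodule, hence all of $M$ by simplicity, and the joint datum $(\lambda,\mu)$ realizes $M$ as a quotient of the universal $(\omega,H)$-Whittaker module attached to that datum, which one shows has a unique simple quotient.

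It then remains to match $(\lambda,\mu)$ to a unique $\zeta$ and to deduce distinctness. Solving the triangular quadratic system \eqref{eq:whittaker-vector-type-intro} from the top, $\lambda_{2r+1}=\zeta_{r}^{2}/4p$ fixes $\zeta_{r}\neq0$ up to sign, then $\lambda_{2r},\dots,\lambda_{r+2}$ fix $\zeta_{r-1},\dots,\zeta_{1}$, and the corrected equation at $i=r+1$ fixes $\zeta_{0}$; so the $\omega$-datum alone determines $\zeta$ only up to the two branches produced by the sign of $\zeta_{r}$. Since $2p-1$ is odd, the leading free-field contribution to $H$ is an odd polynomial in the Heisenberg modes, so the scalars $\mu_{n}$ change under the branch flip and select the correct branch; hence the joint datum $(\lambda,\mu)$ recovers all of $\zeta_{0},\dots,\zeta_{r}$ and so determines $\zeta$ uniquely. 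Applying the key lemma to $M(1,\zeta)$ itself shows its Whittaker vector is unique up to scalar, so $(\lambda,\mu)$ is an isomorphism invariant of $M(1,\zeta)$; as it determines $\zeta$, distinct $\zeta$ yield non-isomorphic modules. Moreover $M(1,\zeta)$ then has the same joint datum $(\lambda,\mu)$ as $M$, so it is likewise the unique simple quotient of that universal module, whence $M\cong M(1,\zeta)$, completing both statements.

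The main obstacle is the key lemma: the space of type-$\lambda$ Whittaker vectors in a simple weak $\sla{p}$-module is one-dimensional. It underlies both the passage from the $\omega$-datum to the full $(\omega,H)$-datum in the completeness step and the invariance used for distinctness, and it cannot be read off from the Virasoro action alone. The plan is to prove it by a filtration argument: using the strong generation by $\omega$ and $H$ together with lower-truncation, filter $M$ by the total $H$-mode applied to $u$ and show, via the two commutator families above, that the $\omega_{m}$ act triangularly on the associated graded with $u$ alone spanning the bottom layer, so that any second type-$\lambda$ Whittaker vector would yield a nonzero vector annihilated by the bottom projection, contradicting simplicity. The delicate point is to control the normally ordered polynomial expressing $[H_{m},H_{n}]$ in the $\omega_{k}$ and to verify that the resulting triangularity is strict; this is where the bulk of the technical work lies.
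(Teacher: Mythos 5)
Your proposal reproduces the paper's general strategy (reduce to the generators $\omega$, $H$; show the top $H$-mode acts on the Whittaker vector by a scalar $\pm q$; match the joint datum against $M(1,\zeta)$, using the sign of the $H$-eigenvalue to resolve the two-to-one ambiguity in \eqref{eq:untwist-zeta-lambda}), but the load-bearing steps are deferred rather than proved, and one of them is misdiagnosed. Everything in your argument hangs on the ``key lemma'' (one-dimensionality of the space of type-$\lambda$ Whittaker vectors in a simple weak $\sla{p}$-module), for which you offer only a filtration/triangularity program and the admission that ``the bulk of the technical work lies'' there; as written this is an outline, not a proof. Moreover, your claim that this uniqueness ``cannot be read off from the Virasoro action alone'' is wrong: the paper obtains exactly what is needed from the known Virasoro results. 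The trick (Lemma \ref{lemma:gen-omega} (2), (3), (5)) is that the vectors $v_{\pm}=H_{m}u\pm qu$ are \emph{automatically} $H_{m}$-eigenvectors (by \eqref{eq:lie-oj} and $H_{m}^{2}u=q^{2}u$), hence each generates a submodule equal to $\langle\omega\rangle v_{\pm}$; if $M$ is simple this exhibits $M$ as a Virasoro Whittaker module, where uniqueness of Whittaker vectors of a given type is known (\cite[Proposition 3.2]{OW}, \cite[Theorem 2.3]{FJK}) and simplicity criteria are known (\cite[Corollary 4.2]{OW}, \cite[Theorem 7]{LGZ}); the relation $H_{m}v_{\pm}=\pm qv_{\pm}$ with $q\neq 0$ then gives the contradiction. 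So the genuinely new input is not a uniqueness theorem for $\sla{p}$ but the generation statement $M=\langle\omega\rangle v$, which comes from the two explicit relations in $\sla{p}$.

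Two further essential steps are missing. First, you never prove that $s$ is odd, and your matching step silently assumes $s=2r+1$ when ``solving the triangular quadratic system.'' This is not cosmetic: every Whittaker vector in $M(1,\zeta)=\langle\omega\rangle u_{\zeta}$ is a scalar multiple of $u_{\zeta}$ (again by the Virasoro uniqueness), so its type always has odd $s$, and a simple weak module with an even-$s$ Whittaker vector would be a counterexample to the theorem. The paper excludes this using the null-field relation \eqref{eq:relation-wH} of Lemma \ref{lemma:singular-vector} — a relation absent from your list of ``two relations'' — which yields the recursion \eqref{eq:oH-induction}: at the maximal mode $m^{\prime}$ with $H_{m^{\prime}}u\neq 0$ the coefficient $n+1-(s+1)(2p-1)/2$ must vanish, forcing $(s+1)(2p-1)$ to be even, i.e.\ $s$ odd and $m^{\prime}=m$. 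Second, your matching of the datum $(\lambda,\mu)$ of $M$ with that of some $M(1,\zeta)$ requires the quantization $\mu^{2}=(4p)^{2p-1}\lambda_{s}^{2p-1}/((2p-1)!)^{2}$ (the paper's \eqref{eq:hm2}, proved via Lemma \ref{lemma:re-HH}); without it $\mu$ is a free parameter and there is no reason $M$ and $M(1,\zeta)$ should be quotients of the same universal object for either sign branch. Relatedly, your assertion that the universal $(\omega,H)$-Whittaker module ``has a unique simple quotient'' is unproved; the paper instead shows the universal object $N(\lambda,q)$ is itself simple (Lemma \ref{lemma:gen-omega} (4), again via the cited Virasoro results), which is what makes the final identification $M\cong N(\lambda,\pm q)\cong M(1,\zeta)$ immediate.
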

It is worth mentioning that the simple weak $M(1)^{+}$-modules 
with  a Whittaker vector for its conformal vector are already classified in \cite[Theorem 1.1]{T1}, where $M(1)^{+}$ is the subalgebra of $M(1)$ consisting of the fixed points of an automorphism of $M(1)$ of order $2$.
Applying a slight modification of the methods in \cite{T1} to  $\sla{p}$,
we can show Theorem \ref{theorem:classification-module}.
Let us explain the basic idea briefly.
It is shown in  \cite[Theorem 3.2]{A}  that $\sla{p}$ is generated by the conformal vector $\omega$ and homogeneous $H\in \sla{p}$ of weight $2p-1$.
Let $M$ be a weak $\sla{p}$-module generated by a Whittaker vector $u$ 
of type $\lambda=(\lambda_{i})_{i=\lfloor\ws/2\rfloor +1}^{\ws}$ for $\omega$.
Fist we find two relations for $\omega$ and $H$ in $\sla{p}$ (See \eqref{eq:relation-wH}
and \eqref{eqref:H-1H}). Using these relations,
we have that $\ws$ is an odd integer and
\begin{align}
H_{i}u&=0\mbox{ for all }i>\dfrac{(\ws+1)(2p-1)}{2}-1
\end{align}
and that $H_{(\ws+1)(2p-1)/2-1}u\in\C u$ if and only if 
$M$ is simple (See Lemma \ref{lemma:gen-omega} (4) and (5)).
In this case 
\begin{align}
\label{eq:actionHm}
H_{(\ws+1)(2p-1)/2-1}u
&=qu
\end{align}
where $q$ is a square root of $(4p)^{2p-1}\lambda_{\ws}^{2p-1}/((2p-1)!)^2$,
$M$ is generated by $u$ as a module for the Virasoro algebra associated to $\omega$, 
and the actions of $H$ on $M$ are uniquely
determined by the scalar $q$ and the actions of $\omega$ on $M$.
In particular, if $M$ is simple, then
$M$ is a Whittaker module for the Virasoro algebra in the sense of \cite{LGZ,OW}.
Since any Whittaker module for the Virasoro algebra
is uniquely determined by its type by \cite[Proposition 3.2]{OW} and \cite[Theorem 2.3]{FJK},
every simple weak $\sla{p}$-module with a Whittaker vector $u$ for $\omega$
is uniquely determined by the type of $u$ for $\omega$ and
$q\in \C$ in \eqref{eq:actionHm}.
Thus, the theorem follows from \eqref{eq:whittaker-vector-type-intro} and the computation 
of $H_{(\ws+1)(2p-1)/2-1}u_{\zeta}$ in $M(1,\zeta)$.

The organization of the paper is as follows. 
In Section \ref{section:preliminary} we recall some basic properties of
the Heisenberg vertex algebra $M(1)$, the singlet vertex algebra $\sla{p}$, and 
the weak $M(1)$-modules $M(1,\zeta)$ where $r\in\Z_{>0}$ and $\zeta\in \C^{r}\times\C^{\times}$.
We show that $M(1,\zeta)$, $r\in\Z_{>0}$ and $\zeta\in \C^{r}\times\C^{\times}$, are pairwise non-isomorphic simple weak $\sla{p}$-modules.
We also find two relations for $\omega$ and $H$ in $\sla{p}$.
In Section \ref{section:main} we give a proof
of Theorem \ref{theorem:classification-module}.

\section{\label{section:preliminary}Preliminary}
We assume that the reader is familiar with the basic knowledge on
vertex algebras as presented in \cite{B,FLM,LL}. 

Throughout this paper, $p$ is an integer with $p\geq 2$,
$\N$ denotes the set of all non-negative integers,
$\C^{\times}=\{z\in\C\ |\ z\neq 0\}$ and
$(V,Y,{\mathbf 1},\omega)$ is a vertex operator algebra.
Recall that $V$ is the underlying vector space, 
$Y(-,\wx)$ is the linear map from $V\otimes_{\C}V$ to $V\db{x}$,
${\mathbf 1}$ is the vacuum vector, and $\omega$ is the conformal vector.
A weak $V$-module $M$ (cf. \cite[p.157]{Li1}) is called {\it $\N$-graded}
if $M$ admits a decomposition $M=\oplus_{j=0}^{\infty}M(j)$ such that
\begin{align}
\label{eq:N-graded}
a_{k}M(j)\subset M(\wt a+j-k-1)
\end{align}
for homogeneous $a\in V$, $j\in \N$, and $k\in\Z$.
For a weak $V$-module $M$ and $U\subset M$, 
$\langle \omega\rangle U$ denotes the set of linear span of the following elements:
\begin{align}
\omega_{i_1}\cdots \omega_{i_n}u\ (n\in\N, i_1,\ldots,i_n\in\Z, u\in U).
\end{align}
When $U=\{u\}$, we simply write $\langle \omega\rangle u$ for $\langle \omega\rangle U$.
For $i\in\Z$, we define
\begin{align}
\Z_{< i}&=\{k\in\Z\ |\ k< i\}\mbox{ and }\Z_{> i}=\{k\in\Z\ |\ k> i\}.
\end{align}
We also define $\Z_{\leq i}$ and 
$\Z_{\geq i}$ similarly.

We recall the vertex operator algebra $M(1)$ associated to the Heisenberg algebra of rank $1$ and some weak $M(1)$-modules.
Let $\hei$ be a one dimensional vector space equipped with a nondegenerate symmetric bilinear form
$\langle -, -\rangle$.
Set a Lie algebra
\begin{align}
\hat{\hei}&=\hei\otimes \C[t,t^{-1}]\oplus \C K
\end{align} 
with the Lie bracket relations 
\begin{align}
[h\otimes t^{m},h^{\prime}\otimes t^{n}]&=m\langle h,h^{\prime}\rangle\delta_{m+n,0}K,&
[K,\hat{\hei}]&=0
\end{align}
for $h,h^{\prime}\in \hei$ and $m,n\in\Z$.
For $h\in \hei$ and $n\in\Z$, $h(n)$ denotes $h\otimes t^{n}\in\widehat{H}$. 
Set two Lie subalgebras of $\widehat{\hei}$:
\begin{align}
\widehat{\hei}_{{\geq 0}}&=\bigoplus_{n\geq 0}\hei \otimes t^n&\mbox{ and }&&
\widehat{\hei}_{<0}&=\bigoplus_{n\leq -1}\hei\otimes t^n.
\end{align}
We take $\alpha\in\hei$ such that 
\begin{align}
\langle \alpha,\alpha\rangle=2p.
\end{align}
For a non-negative integer $r$ and 
an $r+1$-tuple $\ul{\zeta}=(\zeta_0,\ldots,\zeta_{r})\in\C^{r+1}$,
$\C u_{\ul{\zeta}}$ denotes a one dimensional $\widehat{\hei}_{{\geq 0}}$-module uniquely determined
by
\begin{align}
\label{eq:alpha-uz-def}
\alpha(i)\cdot u_{\zeta}&
=\left\{
\begin{array}{ll}
\zeta_iu_{\zeta}&\mbox{ for }i=0,\ldots,r,\\
0&\mbox{ for }i>r
\end{array}
\right.&&\mbox{ and }&
K\cdot u_{\zeta}&=u_{\zeta}.
\end{align}
We take an $\widehat{\hei}$-module 
\begin{align}
\label{eq:untwist-induced}
M(1,\zeta)&=\sU (\widehat{\hei})\otimes_{\sU (\widehat{\hei}_{\geq 0})}\C u_{\zeta}
\cong \sU(\widehat{\fh}_{<0})\otimes_{\C}\C u_{\ul{\zeta}}
\end{align}
where $\sU(\fg)$ is the universal enveloping algebra of a Lie algebra $\fg$.
Then, $M(1)=M(1,(0))$ has a vertex algebra structure
and $M(1,\ul{\zeta})$ is a simple weak $M(1)$-module for any $\ul{\zeta}\in\C^{r+1}$.
The vertex operator algebra $M(1)$ is called the {vertex operator algebra associated to
 the Heisenberg algebra} $\oplus_{0\neq n\in\Z}\fh\otimes t^{n}\oplus \C K$. 
If $r=0$, then $M(1,(\lambda_0))$ is a simple $M(1)$-module. 
We define
\begin{align}
\omega&=\dfrac{\alpha(-1)^2\vac}{4p}+\dfrac{p-1}{2p}\alpha(-2)\vac\in M(1).
\end{align}
Then $\langle\omega\rangle\vac$ is the simple Virasoro vertex operator algebra
with central charge $1-6(p-1)^2/p$.
Since for $i\in\Z$
\begin{align}
\label{eq:un-type}
\omega_{i}u_{\ul{\zeta}}&=(\dfrac{1}{4p}\sum_{\begin{subarray}{l}j,k\in\N\\j+k=i-1\end{subarray}}
\alpha(j)\alpha(k)-
\dfrac{(p-1)i}{2p}\alpha(i-1))u_{\ul{\zeta}},
\end{align}
$u_{\ul{\zeta}}$ is a Whittaker vector of type 
\begin{align}
\label{eq:whittaker-vector-type}
\big(\dfrac{1}{4p}\sum_{\begin{subarray}{l}j,k\in\N\\j+k=i-1\end{subarray}}
\zeta_{j}\zeta_{k}-\delta_{i,r+1}\dfrac{(p-1)(r+1)}{2p}\zeta_{r}\big)_{i=r+1}^{2r+1}
\end{align}
for $\omega$.
If  $r\geq 1$ and $\zeta_{r}\neq 0$, then $\lw$ is an 
eigenvector for $\omega_{2r+1}$ with eigenvalue $\zeta_{r}^2/(4p)$
and hence $M(1,\ul{\zeta})$ is not an $\N$-graded weak $M(1)$-module by \eqref{eq:N-graded}.
We also note that the map 
\begin{align}
\label{eq:untwist-zeta-lambda}
\C^{r}\times \C^{\times}&\rightarrow \C^{r}\times \C^{\times}\nonumber\\
(\zeta_0,\ldots,\zeta_r)&\mapsto (\dfrac{1}{4p}\sum_{\begin{subarray}{l}j,k\in\N\\j+k=i-1\end{subarray}}
\zeta_{j}\zeta_{k}-\delta_{i,r+1}\dfrac{(p-1)(r+1)}{2p}\zeta_{r})_{i=r+1}^{2r+1}
\end{align}
is onto and
the images of $\ul{\zeta}=(\zeta_{i})_{i=0}^{r}, \zeta^{\prime}=(\zeta^{\prime}_{i})_{i=0}^{r}\in \C^{r}\times \C^{\times}$ under this map
are equal if and only if 
\begin{align}
\label{eq:zeta-equal}
\zeta_i+\zeta_i^{\prime}
&=\left\{
\begin{array}{ll}
0,&\mbox{for }i=1,\ldots,r,\mbox{ and }\\
2(p-1)(r+1)&\mbox{for }i=0
\end{array}
\right.
\end{align}
since
\begin{align}
&\dfrac{1}{4p}\sum_{\begin{subarray}{l}j,k\in\N\\j+k=i-1\end{subarray}}
\zeta_{j}\zeta_{k}-\delta_{i,r+1}\dfrac{(p-1)(r+1)}{2p}\zeta_{r}\nonumber\\
&=\left\{
\begin{array}{ll}
\dfrac{1}{4p}\zeta_{r}^2 &\mbox{if }i=2r+1,\\
\dfrac{1}{2p}\zeta_{r}\zeta_{i-1-r}+\dfrac{1}{4p}\sum\limits_{j=i-1-r+1}^{r-1}\zeta_{j}\zeta_{i-1-j}&\\
\quad{}-\delta_{i,r+1}\dfrac{(p-1)(r+1)}{2p}\zeta_{r} &\mbox{if }r+1\leq i<2r+1,
\end{array}
\right.
\end{align}
for $i=r+1,r+2,\ldots,2r+1$. In particular, this map is two-to-one.
Regarding the weak $M(1)$-module $M(1,\zeta)$ as a weak  $\langle\omega\rangle\vac$-module,
we have the following results:
\begin{lemma}
\label{lemma:generate-vir}
\begin{enumerate}
\item
For every $\zeta\in \C^{r}\times\C^{\times}$,
$M(1,\zeta)$ is a simple weak $\langle\omega\rangle\vac$-module.
\item 
For 
$\zeta=(\zeta_{i})_{i=0}^{r}\in \C^{r}\times \C^{\times}$ and 
$\zeta=(\zeta^{\prime}_{i})_{i=0}^{r^{\prime}}\in \C^{r^{\prime}}\times \C^{\times}$,
$
M(1,\ul{\zeta})\cong M(1,\ul{\zeta}^{\prime})$
as weak $\langle\omega\rangle\vac$-modules
if and only if $r=r^{\prime}$ and \eqref{eq:zeta-equal} holds.
\end{enumerate}
\end{lemma}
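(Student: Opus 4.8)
The plan is to view $M(1,\zeta)$ as a module over the Virasoro algebra spanned by $L(n)=\omega_{n+1}$, $n\in\Z$, and $C$ (acting by $c=1-6(p-1)^2/p$), and to identify it with a simple Whittaker module whose isomorphism class is pinned down by the type of $u_{\zeta}$. By \eqref{eq:un-type} the vector $u_{\zeta}$ is a Whittaker vector of type \eqref{eq:whittaker-vector-type} with $s=2r+1$ and $\lfloor s/2\rfloor=r$; its top component $\omega_{2r+1}u_{\zeta}=(\zeta_r^2/4p)u_{\zeta}$ is nonzero because $\zeta_r\neq0$ (so the datum is nondegenerate), and $\omega_i u_{\zeta}=0$ for $i>2r+1$, i.e. $L(n)u_{\zeta}=0$ for $n>2r$. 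Thus part (1) reduces to the two statements that $u_{\zeta}$ generates $M(1,\zeta)$ over $\langle\omega\rangle\vac$ and that $M(1,\zeta)$ has no proper nonzero $\langle\omega\rangle\vac$-submodule, while part (2) reduces to showing that two such modules are isomorphic over $\langle\omega\rangle\vac$ exactly when $u_{\zeta}$ and $u_{\zeta'}$ have the same type.

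The step I expect to be the main obstacle is cyclicity, $M(1,\zeta)=\langle\omega\rangle u_{\zeta}$. This is delicate because $L(0)$ is not semisimple on a Whittaker module and the $\alpha$-degree grading is not preserved by the $L(n)$: a positive mode of $L(n)$ may strike $u_{\zeta}$ and return a scalar rather than annihilating. I would control this by a leading-term computation. Writing $F_D$ for the span of the monomials $\alpha(-k_1)\cdots\alpha(-k_l)u_{\zeta}$ with $\sum_i k_i\le D$ and using $Y(\omega,x)=\tfrac1{4p}{:}\alpha(x)^2{:}+\tfrac{p-1}{2p}\partial_x\alpha(x)$, one checks that for a monomial $v$ of degree $d$ and any $n\ge1$ the degree-$(d+n+r)$ part of $L(-n)v$ equals $\tfrac{\zeta_r}{2p}\,\alpha(-n-r)v$, all remaining terms lying in $F_{d+n+r-1}$; likewise, for $0\le m\le r-1$ the top-degree part of $L(m)v$ is $\tfrac{\zeta_r}{2p}\,\alpha(-(r-m))v$ modulo lower degree. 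I then induct on $D$: granting $F_{D-1}\subseteq\langle\omega\rangle u_{\zeta}$, any degree-$D$ monomial $w$ factors as $\alpha(-k)v$ with $\deg v<D$, so $v\in\langle\omega\rangle u_{\zeta}$, and $w$ is recovered up to the nonzero scalar $\zeta_r/2p$ as the top-degree part of $L(-(k-r))v$ (if $k>r$) or of $L(r-k)v$ (if $k\le r$), the lower-degree remainder lying in $F_{D-1}$. This yields $M(1,\zeta)=\langle\omega\rangle u_{\zeta}$.

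For the absence of proper submodules I would run the reduction in the opposite direction: from $[L(m),\alpha(-k)]=k\alpha(m-k)$ together with $L(m)u_{\zeta}=0$ for $m>2r$ one gets $L(m)F_D\subseteq F_{D-1}$, so the operators $L(m)$ with $m>2r$ strictly lower the $\alpha$-degree; applying them to a nonzero element of a submodule $N$ drives it down to a nonzero element of $F_0=\C u_{\zeta}$, where nondegeneracy ($\zeta_r\neq0$) guarantees that the reduction does not annihilate it. Hence $u_{\zeta}\in N$ and $N\supseteq\langle\omega\rangle u_{\zeta}=M(1,\zeta)$. Equivalently, once cyclicity is in hand one may simply invoke the simplicity of a nondegenerate universal Whittaker module for the Virasoro algebra from \cite{OW,FJK}. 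Either way $M(1,\zeta)$ is a simple weak $\langle\omega\rangle\vac$-module, proving (1).

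For (2), an isomorphism of weak $\langle\omega\rangle\vac$-modules is an isomorphism of $\Vir$-modules of the same central charge, and it sends a Whittaker vector to a Whittaker vector of the same type. By (1) both $M(1,\zeta)$ and $M(1,\zeta')$ are simple Whittaker modules for $\Vir$, and by \cite[Proposition 3.2]{OW} and \cite[Theorem 2.3]{FJK} such a module is determined up to isomorphism by the type of its Whittaker vector. Hence $M(1,\zeta)\cong M(1,\zeta')$ over $\langle\omega\rangle\vac$ if and only if $u_{\zeta}$ and $u_{\zeta'}$ have the same type \eqref{eq:whittaker-vector-type}. That type is a tuple indexed by $r+1,\dots,2r+1$, so its equality forces $r=r'$; and for $r=r'$ equality of types says exactly that $\zeta$ and $\zeta'$ have the same image under the map \eqref{eq:untwist-zeta-lambda}, which by the computation recorded at \eqref{eq:zeta-equal} holds if and only if \eqref{eq:zeta-equal} does. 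This establishes both directions of (2).
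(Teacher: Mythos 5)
Your main route is essentially the paper's: prove cyclicity $M(1,\zeta)=\langle\omega\rangle u_{\zeta}$ by a leading-term induction exploiting $\zeta_{r}\neq 0$, then get simplicity and part (2) from the Virasoro Whittaker-module literature. Your cyclicity induction is organized by total $\alpha$-degree, extracting $\alpha(-k)v$ as the top term $\tfrac{\zeta_r}{2p}\alpha(-k)v$ of $L(r-k)v$; the paper instead inducts downward on the mode $n$ to show $\alpha(n)u_{\zeta}\in S_{n+r}$ (with the spans $S_j$ of Virasoro monomials constrained by partial sums) and uses the separate commutation statement \eqref{eq:alphaLLL} to handle longer monomials. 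These are the same idea with different bookkeeping, and your version checks out (note only that $[L(m),\alpha(-k)]=k\alpha(m-k)+(2-2p)\binom{m+1}{2}\delta_{m,k}$; the scalar term you dropped is harmless for the degree count). Part (2) as you argue it is also the paper's argument: uniqueness of Whittaker vectors via \cite[Proposition 3.2]{OW} and \cite[Theorem 2.3]{FJK}, then the two-to-one analysis of the map \eqref{eq:untwist-zeta-lambda}.

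The genuine problem is your ``direct'' simplicity argument. It is not true that the operators $L(m)$ with $m>2r$ can drive every nonzero element of a submodule down to a nonzero element of $F_0=\C u_{\zeta}$: they annihilate far too much. Concretely, take $x=\alpha(-1)u_{\zeta}\in F_1\setminus F_0$. For every $m>2r$ one has $L(m)x=[L(m),\alpha(-1)]u_{\zeta}+\alpha(-1)L(m)u_{\zeta}=\alpha(m-1)u_{\zeta}=0$, because $m-1>r$ and $L(m)u_{\zeta}=0$; so all of your degree-lowering operators kill $x$, and no iteration of them reaches $\C u_{\zeta}\setminus\{0\}$. (The submodule generated by $x$ does contain $u_{\zeta}$, but to see this one needs the shifted operators $L(m)-\lambda_{m+1}$ in the eigenvalue range $r\le m\le 2r$, e.g. $(L(r+1)-\lambda_{r+2})x=\zeta_{r}u_{\zeta}$; organizing such an argument so that it terminates in a nonzero multiple of $u_{\zeta}$ for an arbitrary element is precisely the nontrivial content of the cited theorems, not a two-line reduction.) So you must fall back on your second option and quote the literature, which is exactly what the paper does; there the correct references for irreducibility are \cite[Corollary 4.2]{OW} together with \cite[Theorem 7]{LGZ} (the latter covering the generalized types $s=2r+1\ge 3$ occurring here), whereas \cite[Theorem 2.3]{FJK}, which you cite for simplicity, is the uniqueness-of-Whittaker-vectors statement that belongs to part (2).
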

\begin{proof}
\begin{enumerate}
\item
For convenience, we define
\begin{align}
L(n)&=\omega_{n+1}
\end{align}
for  $n\in\Z$.
For $n\in\Z_{<0}$, we have
\begin{align}
\label{eq:recursion}
L(n+\wr)
&=\dfrac{1}{2p}
\alpha(n)\alpha(\wr)
+\dfrac{1}{4p}\sum_{\begin{subarray}{l}0\leq i\leq \wr-1,\\j\in\Z, i+j=n+r
\end{subarray}}
\alpha(j)\alpha(i)\nonumber\\
&\quad{}+\dfrac{1}{2p}\sum_{
\begin{subarray}{l}i\geq \wr+1,\\j\in\Z, i+j=n+r
\end{subarray}}
\alpha(j)\alpha(i)\nonumber\\
&\quad{}+\dfrac{1}{4p}
\sum_{\begin{subarray}{l}
0\leq j\leq \wr-1,\\
i<0,i+j=n+\wr
\end{subarray}}
\alpha(i)\alpha(j)+
\dfrac{1}{4p}\sum_{
\begin{subarray}{l}
i,j<0,\\
i+j=n+\wr
\end{subarray}}
\alpha(i)\alpha(j)\nonumber\\
&\quad{}+
\dfrac{p-1}{2p}(-n-\wr-1)\alpha(n+\wr)
\end{align} 
and therefore
\begin{align}
\label{eq:alpha-u}
L(n+\wr)u_{\zeta}
&=\dfrac{1}{2p}
\zeta_{\wr}\alpha(n)u_{\zeta}
+\dfrac{1}{4p}\sum_{i=0}^{\wr-1}
\zeta_{i}\alpha(n+\wr-i)u_{\zeta}\nonumber\\
&\quad{}+\dfrac{1}{4p}
\sum_{\begin{subarray}{l}
0\leq j\leq \wr-1,\\
n+\wr-j<0
\end{subarray}}
\zeta_{j}\alpha(n+\wr-j)u_{\zeta}+
\dfrac{1}{4p}\sum_{
\begin{subarray}{l}
i,j<0,\\
i+j=n+\wr
\end{subarray}}
\alpha(i)\alpha(j)u_{\zeta}\nonumber\\
&\quad{}+
\dfrac{p-1}{2p}(-n-\wr-1)\alpha(n+\wr)u_{\zeta}.
\end{align} 
Since $L(1)\alpha(-1)=(-2p+2)\vac$, for $i,j\in\Z$ we have
\begin{align}
\label{eq:L-alpha}
[L(j),\alpha(i)]
&=\sum_{k=0}^{\infty}\binom{j+1}{k}(L(k-1)\alpha(-1)\vac)_{i+j+1-k}\nonumber\\
&=-i\alpha(i+j)+(-2p+2)\binom{j+1}{2}\vac_{i+j-1,-1}.
\end{align} 
Thus, for $i\in\Z$, $m\in\Z_{\geq 0}$ and $j_1,\ldots,j_m\in\Z$, 
an inductive argument on $m$ shows that
\begin{align}
\label{eq:alphaLLL}
&\alpha(i)L(j_1)\cdots L(j_m)\nonumber\\
& \in \sum_{\wl\geq 0}\sum_{1\leq k_1<\cdots<k_{\wl}\leq m}
\C L(j_{k_1})\ldots L(j_{k_{\wl}})\nonumber\\
&\quad{}+\sum_{\wl\geq 0}\sum_{1\leq k_1<\cdots<k_{\wl}\leq m}
\sum_{\begin{subarray}{l}
T\subset\{1,\ldots,m\}\\
T\cap \{k_1,\ldots,k_{\wl}\}=\varnothing
\end{subarray}}
\C L(j_{k_1})\ldots L(j_{k_{\wl}})
\alpha(i+\sum_{t\in T}j_t).
\end{align}
For $j\in\Z$, we define 
\begin{align}
S_{j}&=\Span_{\C}\Big\{L(j_1)\cdots L(j_m)u_{\zeta}\ \Big|\ 
\begin{array}{l}
m\in\Z_{\geq 0},\\
j_1,\ldots,j_m\in\Z_{\leq r},\mbox{ and}\\
\mbox{for any }\varnothing\neq T\subset\{1,2,\ldots,m\}, \sum\limits_{t\in T}j_t\geq j
\end{array}\Big\}.
\end{align}
We note that $S_{j}\subset S_{j-1}$ for all $j\in\Z$.
We shall show that 
\begin{align}
\label{eq:alphauzeta}
\alpha(n)u_{\zeta}\in S_{n+r}
\end{align}
for all $n\in\Z$. If $n\geq 0$, then \eqref{eq:alphauzeta} follows from
\eqref{eq:alpha-uz-def}.
For $n<0$, we shall show \eqref{eq:alphauzeta} by induction on $n=0,-1,-2,\ldots$.
We have already shown that \eqref{eq:alphauzeta} holds for $n=0$.
Let $n\in\Z_{<0}$. Let $i,j\in\Z_{<0}$ with $i+j=n+r$. We have 
$\alpha(j)u_{\zeta}\in S_{j+r}$ and
$\alpha(i+k)u_{\zeta}\in S_{i+k+r}$ for $k\geq n-i+1$ by the induction hypothesis.
Since $j+r=n+2r-i\geq n+1+r$ and $i+j+r=n+2r\geq n+1+r$,
it follows from \eqref{eq:alphaLLL} that
\begin{align}
\alpha(i)\alpha(j)u_{\zeta}\in S_{n+1+r}.
\end{align}
Thus, \eqref{eq:alphauzeta} follows from \eqref{eq:alpha-u} and the induction hypothesis.
It follows from 
\eqref{eq:L-alpha} and \eqref{eq:alphauzeta} that $M(1,\zeta)=\langle\omega\rangle u_{\zeta}$
and therefore $M(1,\zeta)$ is simple by 
\eqref{eq:whittaker-vector-type},
\cite[Corollary 4.2]{OW} and \cite[Theorem 7]{LGZ}.

\item
It follows from (1) that $M(1,\zeta)=\langle\omega\rangle u_{\zeta}$.
Since any Whittaker vector of the type given in \eqref{eq:whittaker-vector-type} for $\omega$ in $M(1,\zeta)$ is a non-zero scalar multiple of  $\lw_{\zeta}$ 
by \cite[Proposition 3.2]{OW} and \cite[Theorem 2.3]{FJK},
(2) follows from 
\eqref{eq:whittaker-vector-type}--\eqref{eq:zeta-equal}.
\end{enumerate}
\end{proof}

Let 
\begin{align}
V_{\Z (\alpha/p)}\cong \bigoplus_{n\in\Z}M(1)\otimes_{\C}\C e^{n\alpha/p}
\end{align}
be the generalized vertex algebra associated to the lattice $\Z(\alpha/p)$ (See \cite{DL}).
We define a linear map
\begin{align}
\widetilde{Q}&=\Res_{x}Y(e^{-\alpha/p},x) : V_{\Z (\alpha/p)}\rightarrow V_{\Z (\alpha/p)}
\end{align}
and denote by $\sla{p}$ the kernel of $\widetilde{Q}$ in $M(1)$:
\begin{align}
\sla{p}&=\{a\in M(1)\ |\ \widetilde{Q}a=0\}.
\end{align}
For $i\in\N$, we also denote by $S_{i}(\alpha)$ the $i$-th coefficient of $t^i$ in the expansion
of $\exp(\sum_{j=1}^{\infty}\alpha(-j)t^j/j)$:
\begin{align}
\exp(\sum_{j=1}^{\infty}\dfrac{\alpha(-j)}{j}t^j)
&=\sum_{i=0}^{\infty}S_{i}(\alpha)t^i.
\end{align}
It is shown in \cite[Theorem 3.2]{A} that 
$\sla{p}$ is generated by $\omega$
and homogeneous
\begin{align}
H&=S_{2p-1}(\alpha)\vac\in M(1)
\end{align}
of weight $2p-1$.
A direct computation shows that
\begin{align}
\label{eq:H-truncate-C}
H_{i}\lw_{\zeta}&\in\C \lw_{\zeta}
\mbox{ for all }i\geq (r+1)(2p-1)-r-1
\end{align}
and, in particular, 
\begin{align}
\label{eq:H-truncate}
H_{i}\lw_{\zeta}&=
\left\{
\begin{array}{ll}
\dfrac{\zeta_{r}^{2p-1}}{(2p-1)!}\lw_{\zeta},& \mbox{if }i=(r+1)(2p-1)-1,\\
0,&\mbox{if }i>(r+1)(2p-1)-1.
\end{array}
\right.
\end{align}
It is shown in \cite[p.122]{A} that
$H$ is a primary vector of weight $2p-1$ for $\omega$, so
$\omega$ and $H$ satisfies
\begin{align}
\label{eq:lie-oj}
[\omega_{i},H_{j}]&=((2p-2)i-j)H_{i+j-1},\ i,j\in\Z.
\end{align}
The following result is a direct consequence of Lemma \ref{lemma:generate-vir} and \eqref{eq:H-truncate}:
\begin{corollary}
\label{corollray:Mzeta}
The weak $\sla{p}$-modules
$M(1,\zeta)$,$r\in\Z_{\geq 1}$ and $\zeta\in\C^{r}\times \C^{\times}$,
are simple and pairwise non-isomorphic.
In particular, for any odd integer $\ws\geq 3$,
 $\lambda=(\lambda_{(\ws+1)/2},\lambda_{(\ws+1)/2+1},\ldots,\lambda_{\ws})
\in \C^{(\ws-1)/2}\times\C^{\times}$, and
$q\in\C$ such that
\begin{align}
\label{eq:def-q-1}
q^2&=\dfrac{(4p)^{2p-1}}{((2p-1)!)^2}\lambda_{\ws}^{2p-1},
\end{align}
there exists a weak $\sla{p}$-module with a Whittaker vector $u$ of type $\ul{\lambda}$ for $\omega$ such that
\begin{align}
\label{eq:Hmlw-qlw}
H_{i}\lw_{\zeta}&=
\left\{
\begin{array}{rl}
q\lw& \mbox{for }i=(\ws+1)(2p-1)/2-1,\\
0&\mbox{for all }i>(\ws+1)(2p-1)/2-1.
\end{array}
\right.
\end{align}
\end{corollary}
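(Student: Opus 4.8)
The plan is to read Corollary \ref{corollray:Mzeta} as a direct bookkeeping consequence of two facts already in hand: Lemma \ref{lemma:generate-vir}, which tells us that each $M(1,\zeta)$ is a simple weak $\langle\omega\rangle\vac$-module (hence \emph{a fortiori} a simple weak $\sla{p}$-module since $\langle\omega\rangle\vac\subset\sla{p}$), together with the scalar actions \eqref{eq:whittaker-vector-type} and \eqref{eq:H-truncate}. First I would dispose of the pairwise non-isomorphism and simplicity claims. Simplicity is immediate from Lemma \ref{lemma:generate-vir}(1): any non-zero $\sla{p}$-submodule is in particular a non-zero $\langle\omega\rangle\vac$-submodule, and the latter forces it to be all of $M(1,\zeta)$. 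For the non-isomorphism, I would invoke Lemma \ref{lemma:generate-vir}(2): an $\sla{p}$-isomorphism is in particular a $\langle\omega\rangle\vac$-isomorphism, so if $M(1,\zeta)\cong M(1,\zeta')$ as $\sla{p}$-modules then necessarily $r=r'$ and \eqref{eq:zeta-equal} holds. It then remains to rule out that two genuinely distinct parameters satisfying \eqref{eq:zeta-equal} give isomorphic $\sla{p}$-modules; here I would use that $H$ is a genuine element of $\sla{p}$ (not of $\langle\omega\rangle\vac$) and compare the scalar $H_{(r+1)(2p-1)-1}u_\zeta=\zeta_r^{2p-1}/(2p-1)!$ from \eqref{eq:H-truncate} against its $\zeta'$-counterpart. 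Under \eqref{eq:zeta-equal} we have $\zeta_r'=-\zeta_r$, so the two highest $H$-scalars are $\zeta_r^{2p-1}/(2p-1)!$ and $(-\zeta_r)^{2p-1}/(2p-1)!=-\zeta_r^{2p-1}/(2p-1)!$, which are distinct because $\zeta_r\neq 0$ and $2p-1$ is odd; this is exactly the extra invariant that the $\sla{p}$-structure sees and the pure Virasoro structure does not, and it separates the two sheets of the two-to-one map.

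For the second half of the statement I would proceed by reverse-engineering the parametrization. Given an odd $\ws\geq 3$, a type $\lambda=(\lambda_{(\ws+1)/2},\ldots,\lambda_\ws)\in\C^{(\ws-1)/2}\times\C^\times$, and a square root $q$ of $(4p)^{2p-1}\lambda_\ws^{2p-1}/((2p-1)!)^2$, I would set $r=(\ws-1)/2$ so that $2r+1=\ws$, and then solve \eqref{eq:whittaker-vector-type} for a tuple $\zeta\in\C^r\times\C^\times$ whose image under \eqref{eq:untwist-zeta-lambda} is $\lambda$. Surjectivity of \eqref{eq:untwist-zeta-lambda} is stated in the text, so such a $\zeta$ exists; concretely the top component forces $\zeta_r^2=4p\lambda_\ws$, and by \eqref{eq:whittaker-vector-type} the entries of $\lambda$ are recovered recursively from the $\zeta_i$. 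The resulting $M(1,\zeta)$ then carries the Whittaker vector $u_\zeta$ of type exactly $\lambda$ for $\omega$, by \eqref{eq:un-type}--\eqref{eq:whittaker-vector-type}.

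It then remains only to match the $H$-scalar in \eqref{eq:Hmlw-qlw} with the prescribed $q$. Setting $i=(r+1)(2p-1)-1=(\ws+1)(2p-1)/2-1$ in \eqref{eq:H-truncate} gives $H_{(\ws+1)(2p-1)/2-1}u_\zeta=(\zeta_r^{2p-1}/(2p-1)!)\,u_\zeta$ and $H_iu_\zeta=0$ for larger $i$, which is precisely the shape of \eqref{eq:Hmlw-qlw}. So I must check that $\zeta_r^{2p-1}/(2p-1)!$ can be made equal to the given $q$. Since $\zeta_r^2=4p\lambda_\ws$, we get $\zeta_r^{2p-1}/(2p-1)!$ squaring to $(4p)^{2p-1}\lambda_\ws^{2p-1}/((2p-1)!)^2=q^2$, so this scalar is $\pm q$. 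Here is where the two-to-one nature is used constructively: the two square roots $\pm\zeta_r$ of $4p\lambda_\ws$ (together with the correspondingly adjusted $\zeta_0$ dictated by \eqref{eq:zeta-equal}) yield the two preimages under \eqref{eq:untwist-zeta-lambda}, and they produce the two opposite values $\pm\zeta_r^{2p-1}/(2p-1)!$; choosing the sign of $\zeta_r$ so that $\zeta_r^{2p-1}/(2p-1)!=q$ (which is possible precisely because both roots occur and $2p-1$ is odd) completes the construction. The main obstacle I anticipate is bookkeeping care around this sign/square-root matching — confirming that the \emph{same} $\zeta$ simultaneously realizes the prescribed type $\lambda$ and the prescribed scalar $q$, rather than the type being correct only up to the \eqref{eq:zeta-equal}-ambiguity while $q$ lands on the wrong root; tracking which preimage under the two-to-one map \eqref{eq:untwist-zeta-lambda} carries which sign of $H$-eigenvalue is the delicate point, and everything else is a direct substitution into the already-established formulas.
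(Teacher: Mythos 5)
Your proposal is correct and takes essentially the same route as the paper's proof: simplicity and the Virasoro-level classification come from Lemma \ref{lemma:generate-vir}, the two sheets of the two-to-one map \eqref{eq:untwist-zeta-lambda} are separated by the $H$-eigenvalue $\zeta_r^{2p-1}/(2p-1)!$ of \eqref{eq:H-truncate} (using that $2p-1$ is odd and $\zeta_r\neq 0$), and existence with the prescribed $q$ follows from surjectivity of \eqref{eq:untwist-zeta-lambda} together with choosing the preimage whose $H$-scalar is $+q$ rather than $-q$. The paper states these steps more tersely and explicitly cites \cite{OW} and \cite{FJK} for the uniqueness (up to scalar) of a Whittaker vector of a given type, which is the fact you use implicitly when treating the $H$-scalar on the Whittaker vector as an isomorphism invariant.
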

\begin{proof}
It follows from Lemma \ref{lemma:generate-vir} (1) that 
$M(1,\zeta)$
is a simple weak $\sla{p}$-module for every $r\in\Z_{\geq 1}$ and $\zeta\in\C^{r}\times \C^{\times}$.
Let $\zeta\in\C^{r}\times \C^{\times}$ and 
$\zeta^{\prime}\in \C^{r^{\prime}}\times \C^{\times}$ such that 
$M(1,\zeta)\cong M(1,\zeta^{\prime})$ as weak $\sla{p}$-modules.
Since any Whittaker vector of the type given in \eqref{eq:whittaker-vector-type} for $\omega$ in $M(1,\zeta)=\langle\omega \rangle u_{\zeta}$ 
is a non-zero scalar multiple of  $\lw_{\zeta}$ by \cite[Proposition 3.2]{OW} and \cite[Theorem 2.3]{FJK}, Lemma \ref{lemma:generate-vir} (2) 
shows that $r=r^{\prime}$ and \eqref{eq:zeta-equal} holds. Therefore, $\zeta=\zeta^{\prime}$ by \eqref{eq:H-truncate}.
The equality \eqref{eq:Hmlw-qlw} follows from \eqref{eq:untwist-zeta-lambda} and \eqref{eq:H-truncate}.
\end{proof}

The following relation for $\omega$ and $H$ will be used in Lemma \ref{lemma:gen-omega}.
\begin{lemma}
\label{lemma:singular-vector}
\begin{align}
\label{eq:relation-wH}
\dfrac{3}{2p}H_{-4}\vac-\omega_{-1}H_{-2}\vac+\dfrac{2p-1}{2}\omega_{-2}H&=0.
\end{align}
\end{lemma}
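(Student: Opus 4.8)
The plan is to translate the asserted identity, which mixes the modes $H_{-4}\vac$, $H_{-2}\vac$ and the action of $\omega_{-1},\omega_{-2}$, into a single statement about one Virasoro descendant of $H$. Writing $L(n)=\omega_{n+1}$ as in the introduction, the vacuum property $Y(H,\wx)\vac=e^{\wx L(-1)}H$ gives $H_{-n-1}\vac=\tfrac{1}{n!}L(-1)^{n}H$; in particular $H_{-2}\vac=L(-1)H$ and $H_{-4}\vac=\tfrac{1}{6}L(-1)^{3}H$, while $\omega_{-1}=L(-2)$ and $\omega_{-2}=L(-3)$. Substituting these into \eqref{eq:relation-wH} and simplifying the constant $\tfrac{3}{2p}\cdot\tfrac16=\tfrac1{4p}$, the claim becomes the vanishing of the single weight-$(2p+2)$ vector
\begin{align*}
w_{H}:=\frac{1}{4p}L(-1)^{3}H-L(-2)L(-1)H+\frac{2p-1}{2}L(-3)H
\end{align*}
inside the Virasoro submodule $\langle\omega\rangle H\subset M(1)$.

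The first main step is to check that $w_{H}$ is a Virasoro singular vector, i.e. $L(\wn)w_{H}=0$ for all $\wn\geq 1$. Since $H$ is primary of weight $2p-1$, relation \eqref{eq:lie-oj} gives $L(\wn)H=0$ for $\wn>0$ and $L(0)H=(2p-1)H$, so this computation uses only the Virasoro bracket with central charge $1-6(p-1)^{2}/p$ together with these highest-weight conditions. Because the positive part of the Virasoro algebra is generated by $L(1)$ and $L(2)$, it suffices to verify $L(1)w_{H}=0$ and $L(2)w_{H}=0$; these are two short finite computations, and the specific coefficients $\tfrac{1}{4p},-1,\tfrac{2p-1}{2}$ are precisely the ones that make both expressions collapse — reflecting the fact that for this central charge the weight $2p-1$ is exactly a value admitting a level-three singular vector.

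The decisive and, I expect, hardest step is to pass from ``$w_{H}$ is singular'' to ``$w_{H}=0$''. This implication fails at the level of the Virasoro Verma module of highest weight $2p-1$, where the same combination is a genuinely nonzero singular vector; the vanishing is special to the free-field realization. The cleanest way to close the argument is to use the structure of $M(1)=M(1,(0))$ as a module over the simple Virasoro vertex operator algebra $\langle\omega\rangle\vac$: by the Feigin--Fuchs analysis of this Fock module (cf.\ \cite{A}), its Virasoro-primary vectors occur only at the weights $0,2p-1,6p-2,\ldots$, so $M(1)$ has no nonzero primary vector of weight $2p+2$; equivalently, $\langle\omega\rangle H$ contains no singular vector strictly above $H$ in the weight range below $6p-2$. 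Since a singular vector of the submodule $\langle\omega\rangle H$ is in particular a Virasoro-primary vector of $M(1)$, and $w_{H}$ is singular of weight $2p+2$, we conclude $w_{H}=0$, which is \eqref{eq:relation-wH}.

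If one prefers a self-contained verification avoiding the Fock-module structure theory, the same conclusion follows by expanding $w_{H}$ directly in the PBW basis of $M(1)$, using $H=S_{2p-1}(\alpha)\vac$ and the fact that $L(-1)$ acts as the derivation $\alpha(-\wn)\mapsto \wn\,\alpha(-\wn-1)$ with explicit mode formulas for $L(-2)$ and $L(-3)$. The singular-vector computation of the second step is what guarantees the cancellation for every $p$, so in practice this direct route serves only as a consistency check of the already-identified relation.
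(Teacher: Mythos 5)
Your proposal is correct, but it follows a genuinely different route from the paper. The paper proves \eqref{eq:relation-wH} by a self-contained brute-force computation inside the Fock space: it introduces the generating function $E(t)=\exp(\sum_{i\geq1}\alpha(-i)t^{i}/i)$, computes $\omega_{0}^{3}E(t)\vac$, $\omega_{-1}\omega_{0}E(t)\vac$ and $\omega_{-2}E(t)\vac$ explicitly, extracts the relevant coefficient to write $\omega_{0}^{3}H$, $\omega_{-1}\omega_{0}H$, $\omega_{-2}H$ in terms of the $S_{k}(\alpha)\vac$ and $\alpha(-j)$'s, and verifies the cancellation directly, using the same mode translation $\omega_{0}^{i}H=i!\,H_{-1-i}\vac$ that you use (this is exactly the ``consistency check'' route you relegate to your last paragraph). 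You instead identify $\frac{3}{2p}H_{-4}\vac-\omega_{-1}H_{-2}\vac+\frac{2p-1}{2}\omega_{-2}H=\frac{1}{4p}\bigl(L(-1)^{3}-4pL(-2)L(-1)+2p(2p-1)L(-3)\bigr)H$ as the level-three Virasoro singular vector over the primary $H$, and then kill it using the Feigin--Fuchs structure of $M(1)$. Both of your key claims check out: for a primary of weight $h$ the combination $L(-1)^{3}-(2h+2)L(-2)L(-1)+h(h+1)L(-3)$ is annihilated by $L(1)$ and $L(2)$ exactly when $c=-(3h-1)(h-2)/(h+1)$, and substituting $h=2p-1$ gives precisely $c=1-6(p-1)^{2}/p$, so your (asserted but not written out) two finite computations do close with the stated coefficients; and the primary weights of $M(1)$ are $n(n+1)p-n$ for $n\geq0$, i.e.\ $0,2p-1,6p-2,\ldots$, so no nonzero singular vector of weight $2p+2$ exists for $p\geq2$. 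Your approach buys conceptual clarity --- it explains where the coefficients come from (the $(3,1)$ Kac singular vector) and why the vector must vanish --- and requires almost no computation, at the price of importing the structure theory of Fock modules (the decomposition $M(1)\cong\bigoplus_{n\geq0}L(c_{p,1},n(n+1)p-n)$ underlying \cite{A}), whereas the paper's computation needs nothing beyond the definitions of $\omega$ and $H$. If you write yours up, do include the $L(1)$- and $L(2)$-computations explicitly, and cite the Fock-module decomposition precisely, since the step ``singular of weight $2p+2$ implies zero'' rests entirely on it.
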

\begin{proof}
For
\begin{align}
E(t)&=\exp(\sum_{i=1}^{\infty}\dfrac{\alpha(-i)}{i}t^{i}),
\end{align}
we have
\begin{align}
\label{eq:al-E}
\alpha(j)E(t)\vac&=\langle \alpha,\alpha\rangle t^jE(t)\vac
\mbox{ for }j\in\Z_{>0}.
\end{align}
Using \eqref{eq:al-E} repeatedly, we have
\begin{align}
\omega_{0}^{3}E(t)\vac
&=\sum_{i=1}^{\infty}(i+1)(i+2)\alpha(-i-3)t^iE(t)\vac\nonumber\\
&\quad{}+3
\sum_{i,j=1}^{\infty}(i+1)\alpha(-j-1)
\alpha(-2-i)t^{i+j}E(t)\vac\nonumber\\
&\quad{}+
\sum_{i,j,k=1}^{\infty}
\alpha(-1-k)\alpha(-1-j)\alpha(-1-i) t^{i+j+k}E(t)\vac,\nonumber\\
\omega_{-1}\omega_{0}E(t)\vac
&=\sum_{i=1}^{\infty}(i+1)\alpha(-3-i)t^iE(t)\vac\nonumber\\
&\quad{}+\sum_{i,j=1}^{\infty}
\alpha(-2-j)\alpha(-1-i)t^{i+j}E(t)\vac\nonumber\\
&\quad{}+\dfrac{1}{4p}\alpha(-1)^2\sum_{i=1}^{\infty}\alpha(-1-i)t^iE(t)\vac\nonumber\\
&\quad{}+\dfrac{p-1}{2p}
\alpha(-2)\sum_{i=1}^{\infty}\alpha(-1-i)t^iE(t)\vac,\nonumber\\
\omega_{-2}E(t)\vac&=\sum_{i=1}^{\infty}\alpha(-3-i)t^{i}E(t)\vac\nonumber\\
&\quad{}+\dfrac{1}{2p}\alpha(-1)\alpha(-2)E(t)\vac
+\dfrac{p-1}{p}\alpha(-3)E(t)\vac.
\end{align}
Thus
\begin{align}
\omega_0^3E(t)\vac&=
\dfrac{d^3}{dt^3}E(t)\vac-3\alpha(-1) \dfrac{d^2}{dt^2}E(t)\vac\nonumber\\
&\quad{}+(-3\alpha(-2)+3\alpha(-1)^2)\dfrac{d}{dt}E(t)\vac\nonumber\\
&\quad{}+(-2\alpha(-3)+3\alpha(-2)\alpha(-1)-\alpha(-1)^3)E(t)\vac,\nonumber\\
\omega_{-1}\omega_{0}E(t)\vac
&=\dfrac{d}{dt}(t^{-1}\dfrac{d}{dt}E(t))\vac-2\alpha(-1)t^{-1}\dfrac{d}{dt}E(t)\vac-\alpha(-2)\dfrac{d}{dt}E(t)\vac\nonumber\\
&\quad{}+\alpha(-1)E(t)t^{-2}\vac-\alpha(-3)E(t)\vac\nonumber\\
&\quad{}+\alpha(-1)^2t^{-1}E(t)\vac+\alpha(-1)\alpha(-2)E(t)\vac\nonumber\\
&\quad{}
+\dfrac{1}{4p}\alpha(-1)^2(\dfrac{d}{dt}E(t)-\alpha(-1)E(t))\vac
+\dfrac{p-1}{2p}\alpha(-2)(\dfrac{d}{dt}E(t)-\alpha(-1)E(t))\vac,\nonumber\\
\omega_{-2}E(t)\vac
&=t^{-2}\dfrac{d}{dt}E(t)\vac-\alpha(-1)t^{-2}E(t)\vac-\alpha(-2)t^{-1}E(t)\vac-\alpha(-3)E(t)\vac\nonumber\\
&\quad{}+\dfrac{1}{2p}\alpha(-1)\alpha(-2)E(t)\vac+\dfrac{p-1}{p}\alpha(-3)E(t)\vac
\end{align}
and therefore
\begin{align}
\omega_{0}^3H&=
(2p)(2p+1)(2p+2)S_{2p+2}(\alpha)\vac-6p(2p+1)\alpha(-1)S_{2p+1}(\alpha)\vac\nonumber\\
&\quad{}+(-3\alpha(-2)+3\alpha(-1)^2)(2p)S_{2p}(\alpha)\vac\nonumber\\
&\quad{}+(-2\alpha(-3)+3\alpha(-2)\alpha(-1)-\alpha(-1)^3)S_{2p-1}(\alpha)\vac,\nonumber\\
\omega_{-1}\omega_{0}H&=(2p) (2p+2)S_{2p+2}(\alpha)\vac +(-4p-1)\alpha(-1)S_{2p+1}(\alpha)\vac \nonumber\\
&\quad{}+\dfrac{3}{2}\alpha(-1)^2 S_{2p}(\alpha)\vac+(-p - 1)\alpha(-2)S_{2p}(\alpha)\vac\nonumber \\
&\quad{}-\alpha(-3)S_{2p-1}(\alpha)\vac- \dfrac{1}{4 p}\alpha(-1)^3S_{2p-1}(\alpha)\vac + \dfrac{p + 1}{2 p} \alpha(-2) \alpha(-1)S_{2p-1}(\alpha)\vac,
\nonumber\\
\omega_{-2}H&=(2p + 2)S_{2p+2}(\alpha)\vac - \alpha(-1)S_{2p+1}(\alpha)\vac- \alpha(-2)S_{2p}(\alpha)\vac\nonumber\\
&\quad{}-\alpha(-3)S_{2p-1}(\alpha)\vac + \dfrac{1}{2p}\alpha(-1)\alpha(-2)S_{2p-1}(\alpha)\vac + 
\dfrac{p - 1}{p}\alpha(-3)S_{2p-1}(\alpha)\vac.
\end{align}
Since $\omega_0^iH=i!H_{-1-i}\vac$ for $i\in\Z_{\geq 0}$, the assertion follows from a direct computation.
\end{proof}

The same argument as in the proof of \cite[Lemma 3.2]{T1} shows 
the following result:
\begin{lemma}
\label{lemma:top}
Let $\ws\in\Z$ with $\ws\geq 2$.
Let $M$ be a weak $\langle\omega\rangle\vac$-module and $\lw\in M$ such that
$\omega_{i}\lw=0$ for all $i>\ws$.
Let
$p,q,i_1,\ldots,i_{p},i_{p+1},\ldots,i_{q}\in\Z$
such that $0\leq p\leq q$, $i_1,\ldots,i_{p}\geq 0,i_{p+1},\ldots,i_{q}<0$, and
	$\ws \wq\leq \wi_1+\cdots+\wi_{q}$. If $p<q$ or $(i_1,\ldots,i_{p})\neq(\ws,\ldots,\ws)$, then
	\begin{align}
\label{eq:oo}
		\omega_{i_q}\cdots \omega_{i_{p+1}}\omega_{i_p}\cdots \omega_{i_1}\lw&=0.
	\end{align}
\end{lemma}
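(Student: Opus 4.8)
The plan is to argue by induction on $q$, using the Virasoro commutation relations to bring the operators into a sorted normal form. Recall that since $\omega$ is a conformal vector, its modes satisfy $[\omega_a,\omega_b]=(a-b)\omega_{a+b-1}+\gamma_a\delta_{a+b,2}$ for suitable scalars $\gamma_a$, the central term occurring only when $a+b=2$; and the hypothesis reads $\omega_i\lw=0$ for $i>\ws$. For a monomial $\omega_{k_\ell}\cdots\omega_{k_1}$ I write $\ell$ for its length and $I=\sum_j k_j$ for its total index, and I track the \emph{excess} $I-\ws\ell$. The key bookkeeping observation is that one application of the commutation relation replaces $\omega_a\omega_b$ by $\omega_b\omega_a+(a-b)\omega_{a+b-1}$ (plus a scalar when $a+b=2$): the reordered term keeps both $I$ and $\ell$, while every lower term drops $\ell$ by $1$ (resp. $2$) and drops $I$ by exactly the same amount, so its excess strictly increases by $(\ws-1)$ (resp. $2(\ws-1)$), which is positive since $\ws\geq 2$.

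First I would sort. Applying the relation repeatedly, as a terminating bubble sort on adjacent out-of-order pairs, expresses the given product as a linear combination of sorted monomials $\omega_{k_1}\cdots\omega_{k_\ell}\lw$ with $k_1\leq\cdots\leq k_\ell$. There is a unique term of full length $\ell=q$, namely the sorted rearrangement of $(i_1,\dots,i_q)$, and all remaining terms have length $\ell<q$. By the observation above, the hypothesis $I=\sum_k i_k\geq \ws q$ gives excess $\geq 0$ for the length-$q$ term, while each shorter term has strictly positive excess, i.e.\ $\sum_j k_j>\ws\ell$. Note also that in a sorted monomial the negative indices are automatically leftmost and the non-negative ones rightmost, so each sorted monomial already has exactly the form required in the statement.

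For the length-$q$ term $\omega_{k_1}\cdots\omega_{k_q}\lw$ the rightmost factor $\omega_{k_q}$ acts first and $k_q=\max_j k_j\geq \ws$ by the excess bound. If $k_q>\ws$ then $\omega_{k_q}\lw=0$ and the term vanishes; if $k_q=\ws$ then $k_j\leq \ws$ for all $j$ forces $\sum_j k_j\leq \ws q$, hence $\sum_j k_j=\ws q$ and every $k_j=\ws$, i.e.\ the term is $\omega_{\ws}^{q}\lw$. The latter can occur as the sorted form of the original product only when $(i_1,\dots,i_q)=(\ws,\dots,\ws)$ with $p=q$, which is exactly the excluded case; so under the hypothesis of the lemma the length-$q$ term is $0$. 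Each shorter term is a sorted monomial of length $<q$ satisfying $\sum_j k_j>\ws\ell$, hence it is not of the form $\omega_{\ws}^{\ell}\lw$ (that would require $\sum_j k_j=\ws\ell$) and so falls under the non-excluded case of the lemma for a strictly smaller length; it vanishes by the induction hypothesis. Combining the two, the whole product is $0$, and the cases $q=0,1$ are immediate. The only real obstacle is the careful verification that the excess strictly increases under every commutator, including the central contribution, since this is precisely what singles out $\omega_{\ws}^{q}\lw$ as the unique potentially surviving monomial.
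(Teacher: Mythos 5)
Your proposal is correct and takes essentially the same route the paper relies on: the paper gives no standalone proof of this lemma, deferring to \cite[Lemma 3.2]{T1}, and that argument is precisely this kind of Virasoro commutator straightening with the weight count $\sum_j i_j - sq$ singling out $\omega_{s}^{q}u$ as the only monomial that could survive. Your excess bookkeeping is sound (each commutator or central term drops length and total index by equal amounts, so the excess strictly increases by at least $s-1\geq 1$), the unique full-length sorted term is handled correctly, and the induction closes, so there is no gap.
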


The following results will be used in Lemmas \ref{lemma:re-HH}.
\begin{lemma}
\label{lemma:formula-omega}
Let $\ws\in\Z$ with $\ws\geq 2$, $M$ 
a weak $\sla{p}$-module, and $\lw\in M$ such that
$\omega_{i}\lw=0$ for all $i>\ws$.
Let $q\in\Z_{\geq 0}$ and $j_1,\ldots,j_q\in\Z_{\leq -1}$, and
define
\begin{align}
\label{eq:qj2k}
k&=\wt (\omega_{j_1}\cdots\omega_{j_q}\vac)=-j_1-\cdots-j_q+q.
\end{align}
Let $m\in\Z_{\geq(\ws+1)k/2-1}$.
\begin{enumerate}
\item If $k\neq 2q$ or $m>(\ws+1)k/2-1$, then
\begin{align}
\label{eq:lem-omega-0}
(\omega_{j_1}\cdots\omega_{j_q})_{m}\lw&=0.
\end{align}
\item If $k=2q$, then
\begin{align}
j_1=\cdots=j_{k/2}&=-1
\end{align}
and
\begin{align}
\label{eq:omega-action-zeta}
(\omega_{-1}^{k/2})_{(\ws+1)k/2-1}\lw&
=\omega_{\ws}^{k/2}\lw.
\end{align}
\item
For $i,j\in\Z$ with $i+j=(\ws+1)(2p-1)-2$, we have
\begin{align}
[H_{i},H_{j}]u&=0.
\end{align}
\end{enumerate}
\end{lemma}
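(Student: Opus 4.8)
The plan is to prove parts (1) and (2) by expanding the abstract mode $(\omega_{j_1}\cdots\omega_{j_q})_m$ into normally ordered products of Virasoro modes and then applying Lemma \ref{lemma:top}, and to deduce (3) from (1) through the commutator formula. First I would write $j_t=-1-n_t$ with $n_t\in\Z_{\geq 0}$, so that $k=\sum_t(1-j_t)=2q+\sum_t n_t$; in particular $k\geq 2q$, with equality exactly when every $n_t=0$, i.e.\ every $j_t=-1$. By the reconstruction formula the field attached to $\omega_{j_1}\cdots\omega_{j_q}\vac$ is the normally ordered product $:\!\prod_{t}\partial^{(n_t)}Y(\omega,x)\!:$, where $\partial^{(n)}=\partial_x^{n}/n!$. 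Hence, for a fixed $m$, the operator $(\omega_{j_1}\cdots\omega_{j_q})_m$ is a finite linear combination of normally ordered monomials $\omega_{a_1}\cdots\omega_{a_q}$ in which the non-negative modes stand to the right of the negative ones, and comparing powers of $x$ shows that every such monomial satisfies $a_1+\cdots+a_q=m+1+q-k$.

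The key estimate is then immediate: using $m\geq(s+1)k/2-1$ together with $k\geq 2q$ I obtain $a_1+\cdots+a_q\geq(s-1)k/2+q\geq sq$, and this inequality is strict whenever $m>(s+1)k/2-1$ or $k>2q$. Because the monomials are already in the arrangement demanded by Lemma \ref{lemma:top} (non-negative indices on the right), that lemma forces $\omega_{a_1}\cdots\omega_{a_q}u=0$ unless $(a_1,\dots,a_q)=(s,\dots,s)$, and the latter requires $a_1+\cdots+a_q=sq$, i.e.\ the boundary case $m=(s+1)k/2-1$ and $k=2q$ simultaneously. Part (1) follows at once, since under its hypotheses the sum is strictly larger than $sq$ and no monomial survives. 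For part (2), $k=2q$ forces all $j_t=-1$, so there are no derivatives, $Y(\omega_{-1}^{q}\vac,x)=\,:\!Y(\omega,x)^{q}\!:$, and the unique tuple $(a_1,\dots,a_q)=(s,\dots,s)$ occurs with coefficient $1$ (the relevant binomial factors $\binom{-a_t-1}{n_t}$ are all $\binom{-s-1}{0}=1$, and the identical modes $\omega_s$ need no reordering); every other monomial in the coefficient of $x^{-(s+1)q}$ is annihilated by Lemma \ref{lemma:top}. This yields $(\omega_{-1}^{q})_{(s+1)q-1}u=\omega_s^{q}u$, which is \eqref{eq:omega-action-zeta}.

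For part (3) I would apply the commutator formula $[H_i,H_j]=\sum_{\ell\geq 0}\binom{i}{\ell}(H_\ell H)_{i+j-\ell}$. Each $H_\ell H$ is homogeneous of weight $k_\ell=2(2p-1)-\ell-1=4p-3-\ell$, and because $\sla{p}$ is the $\mathcal{W}$-algebra $\mathcal{W}(2,2p-1)$ the operator product of $H$ with itself closes on Virasoro descendants of the vacuum, so $H_\ell H\in\langle\omega\rangle\vac$ for every $\ell$; I would record the explicit closure from the second relation for $\omega$ and $H$ in $\sla{p}$. Expanding $H_\ell H$ as a combination of monomials $\omega_{j_1}\cdots\omega_{j_q}\vac$ of weight $k_\ell$ reduces $(H_\ell H)_{i+j-\ell}u$ to the situation of part (1), and a short computation with $i+j=(s+1)(2p-1)-2$ gives $(i+j-\ell)-\big((s+1)k_\ell/2-1\big)=(1+\ell)(s-1)/2>0$ for $s\geq 2$. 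Thus the mode index strictly exceeds the threshold of part (1), every term $(H_\ell H)_{i+j-\ell}u$ vanishes, and summing gives $[H_i,H_j]u=0$.

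The step I expect to be the main obstacle is the passage from the abstract mode $(\omega_{j_1}\cdots\omega_{j_q})_m$ to a genuinely normally ordered combination of Virasoro monomials with all non-negative modes to the right, since this is precisely the hypothesis of Lemma \ref{lemma:top}; verifying the degree identity $a_1+\cdots+a_q=m+1+q-k$ and the coefficient $1$ in part (2) are the technical points there. For part (3) the essential input is the structural fact that each $H_\ell H$ is a vacuum Virasoro descendant, after which the argument collapses to the clean weight inequality displayed above.
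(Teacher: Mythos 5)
Your proposal is correct and follows essentially the same route as the paper's proof: parts (1)--(2) via the expansion of the iterate mode into normally ordered Virasoro monomials with index sum $m+1+q-k$, the estimate $(s-1)k/2+q\geq sq$ (strict unless $k=2q$ and $m=(s+1)k/2-1$), and Lemma \ref{lemma:top}; part (3) via the commutator formula, the closure $H_{\ell}H\in\langle\omega\rangle\vac$, and the weight inequality $(s-1)(\ell+1)/2>0$ feeding back into part (1). One caution: the closure $H_{\ell}H\in\langle\omega\rangle\vac$ for $\ell\geq 0$ should be cited from \cite[Lemma 3.2]{A}, as the paper does, and not obtained from the relation \eqref{eqref:H-1H}, since that relation concerns only $H_{-1}H$ and its proof (Lemma \ref{lemma:re-HH}) itself invokes the present lemma, so that route would be circular.
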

\begin{proof}
It follows from \eqref{eq:qj2k} that $2q\leq k$.
Thus, for $i_1,\ldots,i_{q}\in\Z$
 such that
$i_1+\cdots+i_q=j_1+\cdots+j_q+m+1$, we have
\begin{align}
\label{eq:k-q}
i_1+\cdots+i_q&
=q-k+m+1\nonumber\\
&\geq q-k+(\ws+1)k/2-1+1\nonumber\\
&=\dfrac{(\ws-1)\wk}{2}+q\geq \ws\wq.
\end{align}
It follows from Lemma \ref{lemma:top} that
if $(i_1,\ldots,i_q)\neq (\ws,\ldots,\ws)$, then
\begin{align}
\label{eq:omega-0}
\nor \omega_{i_1}\cdots\omega_{i_q}\nor\lw&=0.
\end{align}
Here, for $i_1,\ldots,i_{q}\in\Z$ we define 
$\nor \omega_{i_1}\cdots\omega_{i_{q}}\nor$ inductively by
\begin{align}
\label{eq:nomal-ordering}
\nor \omega_{i_1}\nor&=\omega_{i_1}\qquad\mbox{ and}\nonumber\\
\nor \omega_{i_1}\cdots\omega_{i_{q}}\nor&=
\left\{
\begin{array}{ll}
\nor \omega_{i_2}\cdots\omega_{i_{q}}\nor \omega_{i_1}&\mbox{if }i_1\geq 0,\\
\omega_{i_1}\nor \omega_{i_2}\cdots\omega_{i_{q}}\nor &\mbox{if }i_1<0.
\end{array}\right.
\end{align}
\begin{enumerate}
\item
If $k>2q$ or $m>(\ws+1)k/2-1$, then
\begin{align}
i_1+\cdots+i_q&>\ws\wq
\end{align}
by \eqref{eq:k-q}. Thus, \eqref{eq:lem-omega-0} follows from \eqref{eq:omega-0}.
\item
Suppose $k=2q$. It follows from \eqref{eq:qj2k} that $j_1=\cdots=j_{k}=-1$.
It follows from \eqref{eq:k-q} and \eqref{eq:omega-0} that
\begin{align}
(\omega_{-1}^{k/2}\vac)_{(\ws+1)k/2-1}\lw
&=\sum_{
\begin{subarray}{l}
i_1,\ldots,i_{k/2}\in\Z\\
i_1+\cdots+i_{k/2}+k/2=(\ws+1)k/2
\end{subarray}
}\nor\omega_{i_1}\cdots\omega_{i_{k/2}}\nor\lw\nonumber\\
&=\omega_{\ws}^{k/2}\lw.
\end{align}
\item
It is shown in \cite[Lemma 3.2]{A} that $H_{k}H\in\langle\omega \rangle\vac$
 for all $k\geq -2p$.
Thus, for $k=0,1,\ldots$ since
\begin{align}
&(\ws+1)(2p-1)-2-k-(\dfrac{\ws+1}{2}\wt(H_{k}H)-1)\nonumber\\
&=\dfrac{\ws-1}{2}(\wk+1)>0,
\end{align}
we have
\begin{align}
(H_{k}H)_{(\ws+1)(2p-1)-2-k}&=0
\end{align}
by (1) and therefore
\begin{align}
[H_i,H_j]&=\sum_{k=0}^{\infty}
\binom{i}{k}(H_{k}H)_{(\ws+1)(2p-1)-2-k}\lw=0.
\end{align}
\end{enumerate}
\end{proof}

By Lemma \ref{lemma:formula-omega}, we have the following relation for $\omega$ and $H$,
which will be used in Lemma \ref{lemma:gen-omega} together with \eqref{eq:relation-wH}.
\begin{lemma}
\label{lemma:re-HH}
\begin{align}
\label{eqref:H-1H}
H_{-1}H&=\dfrac{(4p)^{2p-1}}{((2p-1)!)^2}\omega_{-1}^{2p-1}\vac+a
\end{align}
where $a$ is an element of 
\begin{align}
\label{eq:neq(-1,-1)}
\Span_{\C}\Big\{\omega_{j_1}\cdots \omega_{j_k}\vac\ \Big|\ 
\begin{array}{l}
k\in\Z_{\geq 0}, j_1,\ldots,j_k\in\Z_{<0},\\
\wt (\omega_{j_1}\cdots \omega_{j_k})=4p-2,\\
(j_1,\ldots,j_k)\neq (-1,\ldots,-1)
\end{array}
\Big\}.
\end{align}
\end{lemma}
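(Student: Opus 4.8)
The plan is to determine the single scalar $c$ by probing $H_{-1}H$ with one well-chosen mode, rather than expanding $H_{-1}H = S_{2p-1}(\alpha)_{-1}S_{2p-1}(\alpha)\vac$ in the Heisenberg generators and re-expressing the result in Virasoro monomials (which would be unwieldy). First I would record that $H_{-1}H$ has weight $2(2p-1)=4p-2$ and, by \cite[Lemma 3.2]{A} applied with $k=-1\geq -2p$, lies in $\langle\omega\rangle\vac$. Since the vectors $\omega_{j_1}\cdots\omega_{j_k}\vac$ with $j_1,\ldots,j_k\in\Z_{<0}$ and $\wt(\omega_{j_1}\cdots\omega_{j_k})=4p-2$ span the weight-$(4p-2)$ subspace of $\langle\omega\rangle\vac$, I may fix an expansion
\begin{align*}
H_{-1}H=c\,\omega_{-1}^{2p-1}\vac+a
\end{align*}
with $a$ in the span \eqref{eq:neq(-1,-1)} and $c\in\C$; the whole content of the lemma is then the evaluation $c=(4p)^{2p-1}/((2p-1)!)^2$.

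Next I would fix $r\in\Z_{\geq 1}$ and $\zeta\in\C^{r}\times\C^{\times}$ and probe the identity on the Whittaker vector $\lw_{\zeta}\in M(1,\zeta)$, which has $\ws=2r+1$, satisfies $\omega_{i}\lw_{\zeta}=0$ for $i>\ws$, and is an eigenvector for $\omega_{\ws}=\omega_{2r+1}$ with eigenvalue $\zeta_{r}^{2}/4p$. Setting $k=\wt(\omega_{-1}^{2p-1}\vac)=4p-2$ and $m=(\ws+1)k/2-1=(\ws+1)(2p-1)-1$, I would apply the mode $(\cdot)_{m}$ to $\lw_{\zeta}$. Every monomial in $a$ has weight $4p-2$ but is not $(-1,\ldots,-1)$, so it has $k\neq 2q$ and is killed by Lemma \ref{lemma:formula-omega}(1), while Lemma \ref{lemma:formula-omega}(2) gives $(\omega_{-1}^{2p-1}\vac)_{m}\lw_{\zeta}=\omega_{\ws}^{2p-1}\lw_{\zeta}=(\zeta_{r}^{2}/4p)^{2p-1}\lw_{\zeta}$. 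Thus the critical mode isolates exactly the $\omega_{-1}^{2p-1}\vac$ component, and
\begin{align*}
(H_{-1}H)_{m}\lw_{\zeta}=c\,(\zeta_{r}^{2}/4p)^{2p-1}\lw_{\zeta}.
\end{align*}

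Finally I would compute the same quantity directly from the iterate formula $(H_{-1}H)_{m}=\sum_{j\geq 0}\big(H_{-1-j}H_{m+j}+H_{m-1-j}H_{j}\big)$. Writing $N=(r+1)(2p-1)-1$, so that $m=2N+1$, the truncation \eqref{eq:H-truncate} gives $H_{j}\lw_{\zeta}=0$ for $j>N$ and $H_{N}\lw_{\zeta}=(\zeta_{r}^{2p-1}/(2p-1)!)\lw_{\zeta}$. The first sum vanishes because $m+j>N$. In the second sum the terms with $j>N$ vanish; for $0\leq j<N$ one has $m-1-j>N$, hence $H_{m-1-j}\lw_{\zeta}=0$, so $H_{m-1-j}H_{j}\lw_{\zeta}=[H_{m-1-j},H_{j}]\lw_{\zeta}$, which vanishes by Lemma \ref{lemma:formula-omega}(3) since $(m-1-j)+j=2N=(\ws+1)(2p-1)-2$; thus only $j=N$ survives, contributing $H_{N}H_{N}\lw_{\zeta}=(\zeta_{r}^{2p-1}/(2p-1)!)^{2}\lw_{\zeta}$. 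Equating the two expressions and using $\zeta_{r}\neq 0$ yields $c=(4p)^{2p-1}/((2p-1)!)^{2}$. The hard part is precisely this collapse of the iterate-formula sum to the single term $j=N$: it hinges on the truncation \eqref{eq:H-truncate} together with the commutator vanishing of Lemma \ref{lemma:formula-omega}(3), and the matching of the index condition $(m-1-j)+j=(\ws+1)(2p-1)-2$ is the delicate bookkeeping to get right.
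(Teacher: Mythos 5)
Your proposal is correct and follows essentially the same route as the paper's own proof: the same decomposition $H_{-1}H=c\,\omega_{-1}^{2p-1}\vac+a$ via \cite[Lemma 3.2]{A}, the same probing of the mode $2(r+1)(2p-1)-1$ on the Whittaker vector $u_{\zeta}\in M(1,\zeta)$, the same use of Lemma \ref{lemma:formula-omega} (1)--(3) together with \eqref{eq:H-truncate} to collapse the iterate-formula sum to $H_{(r+1)(2p-1)-1}^{2}u_{\zeta}$, and the same final comparison yielding $c=(4p)^{2p-1}/((2p-1)!)^{2}$.
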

\begin{proof}
By \cite[Lemma 3.2]{A}, we can write 
\begin{align}
H_{-1}H&=c \omega_{-1}^{2p-1}\vac+ a
\end{align}
where $c\in\C$ and $a$ is an element of \eqref{eq:neq(-1,-1)}.
By \eqref{eq:H-truncate} and Lemma \ref{lemma:formula-omega} (3) with 
$\ws=2\wr+1$, we have
\begin{align}
\label{eq:Hm1H}
&(H_{-1}H)_{2(r+1)(2p-1)-1}\lw_{\zeta}\nonumber\\
&=
\sum_{\begin{subarray}{l}
i+j=2(r+1)(2p-1)-2,\\
i\geq 0
\end{subarray}}H_{j}H_{i}\lw_{\zeta}
+\sum_{\begin{subarray}{l}i+j=2(r+1)(2p-1)-2,\\
i<0
\end{subarray}}H_{i}H_{j}\lw_{\zeta}\nonumber\\
&=
\sum_{\begin{subarray}{c}i+j=2(r+1)(2p-1)-2,\\0\leq i\leq 
(\wor+1)(2p-1)-1\end{subarray}}([H_{j},H_{i}]+H_{i}H_{j})\lw_{\zeta}\nonumber\\
&=H_{(r+1)(2p-1)-1}^2\lw_{\zeta}.
\end{align}
By Lemma \ref{lemma:formula-omega} (1), we also have
\begin{align}
a_{2(r+1)(2p-1)-1}u_{\zeta}&=0.
\end{align}
It follows from \eqref{eq:un-type} and Lemma \ref{lemma:formula-omega} (2) that
\begin{align}
(\omega_{-1}^{2p-1}\vac)_{2(\wr+1)(2p-1)-1}u_{\zeta}&=(\omega_{2\wr+1})^{2p-1}u_{\zeta}=
\dfrac{\zeta_{r}^{4p-2}}{(4p)^{2p-1}}u_{\zeta}.
\end{align}
By \eqref{eq:H-truncate} and \eqref{eq:Hm1H},
we  have
$c=(4p)^{2p-1}/((2p-1)!)^2$.
\end{proof}
\begin{remark}
Lemma \ref{lemma:re-HH} also follows from the proof of \cite[Lemma 6.1]{A}.
\end{remark}

\section{\label{section:main}Weak $\sla{p}$-modules with Whittaker vectors.}
In this section, we will show Theorem \ref{theorem:classification-module}.
The following  is a key result to show Theorem \ref{theorem:classification-module}.
\begin{lemma}
\label{lemma:gen-omega}
Let $\ws, \wot\in\Z$ with $2\leq \wot\leq \ws$, $\ul{\lambda}=(\lambda_{t},\ldots,\lambda_{\ws})\in \C^{\ws-\wot+1}$
with $\lambda_{\ws}\neq 0$, and $q\in \C$ such that
\begin{align}
\label{eq:def-q}
q^2&=\dfrac{(4p)^{2p-1}}{((2p-1)!)^2}\lambda_{\ws}^{2p-1},
\end{align}
and define
\begin{align}
\label{eq:define-m}
\wm&=\dfrac{(\ws+1)(2\wp-1)}{2}-1.
\end{align}
Let $M$ be a weak $\sla{p}$-module and 
$\lw\in M$ a non-zero element such that
$\omega_{i}\lw=\lambda_{i}\lw$ 
for $i=\wot,\ldots,\ws$ and
$\omega_{i}\lw=0$ for all $i>\ws$.
\begin{enumerate}
\item 
The integer $\ws$ is odd and
\begin{align}
\label{eq:hm2}
H_{m}^2\lw&=\dfrac{(4p)^{2p-1}}{((2p-1)!)^2}\lambda_{\ws}^{2p-1}\lw\nonumber\\
H_{i}\lw&=0\mbox{  for all  }\wi>\wm.
\end{align}
Moreover, the submodule of $M$ generated by $\lw$ is equal to $\langle \omega\rangle (\C\lw+\C H_{m}\lw)$.
\item
If $H_{m}\lw \not\in \C\lw$, then two nonzero elements $v_{\pm}=H_{\wm}\lw\pm q\lw\in M$ satisfy
\begin{align}
\label{eq:omega-t-s}
\omega_{i}\wv_{\pm}
&=\left\{
\begin{array}{rl}
\lambda_{i}\wv_{\pm}&\mbox{for }\wot\leq i\leq \ws,\\
0&\mbox{for all }\wi>\ws
\end{array}\right.
\end{align}
and
\begin{align}
\label{eq:hmv}
H_{m}\wv_{\pm}=&\pm q\wv_{\pm}.
\end{align}
\item Suppose that $H_{m}u\in\C u$.
Then
\begin{align}
\label{eq:hmupmu}
H_{m}u=qu\mbox{ or }-qu
\end{align}
and
\begin{align}
H_{i}u&=\mu_iu \mbox{ for }\wm-\ws+\wot\leq \wi\leq \wm \label{eq:Jw-3}
\end{align}
where $\mu_i$ is a polynomial in $\lambda_{\ws-\wm+i},\lambda_{\ws-\wm+i+1},\ldots,\lambda_{\ws-1},\lambda_{\ws}^{-1}$ and $q$ for each $i=\wm-\ws+\wot,\wm-\ws+\wot+1,\ldots, \wm$.
Moreover, the submodule of $M$ generated by $u$ is equal to $\langle\omega\rangle u$.

\item If $t=(s+1)/2$ and  $H_{m}u\in \C u$, then the submodule of $M$ generated by $u$
is simple.
\item If $t=(s+1)/2$ and $M$ is simple, then $H_{m}u\in\C u$.
\end{enumerate}
\end{lemma}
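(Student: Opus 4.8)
The plan is to argue by contradiction: I would assume $M$ is simple while $H_{m}u\notin\C u$, and then manufacture two Whittaker vectors of the same type whose $H_{m}$-eigenvalues are incompatible inside one cyclic module. First I would feed the hypothesis $H_{m}u\notin\C u$ into part (2). This produces two nonzero elements $v_{\pm}=H_{m}u\pm qu$ with $\omega_{i}v_{\pm}=\lambda_{i}v_{\pm}$ for $\wot\leq i\leq\ws$, $\omega_{i}v_{\pm}=0$ for $i>\ws$, and $H_{m}v_{\pm}=\pm qv_{\pm}$ by \eqref{eq:hmv}. Since $\wot=(\ws+1)/2=\lfloor \ws/2\rfloor+1$, each of $v_{+}$ and $v_{-}$ is a genuine Whittaker vector of type $\ul{\lambda}$ for $\omega$ on which $H_{m}$ acts as a scalar.

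Next I would pin down the global structure of $M$. Because $H_{m}v_{+}\in\C v_{+}$ and $\wot=(\ws+1)/2$, part (3) identifies the $\sla{p}$-submodule generated by $v_{+}$ with $\langle\omega\rangle v_{+}$, and part (4) certifies that this submodule is simple. As $M$ is simple and $v_{+}\neq 0$, it follows that $M=\langle\omega\rangle v_{+}$; in other words, as a module for the Virasoro algebra attached to $\omega$, $M$ is cyclically generated by the single Whittaker vector $v_{+}$ of type $\ul{\lambda}$. Exactly as in Lemma \ref{lemma:generate-vir}(2) and Corollary \ref{corollray:Mzeta}, the uniqueness of Whittaker vectors for the Virasoro algebra (\cite[Proposition 3.2]{OW} and \cite[Theorem 2.3]{FJK}) then forces every Whittaker vector of type $\ul{\lambda}$ in $M$ to be a nonzero scalar multiple of $v_{+}$.

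Finally I would exploit the eigenvalue clash. Since $v_{-}$ is also a Whittaker vector of type $\ul{\lambda}$ in $M$, the previous step yields $v_{-}=cv_{+}$ for some $c\in\C^{\times}$. Applying $H_{m}$ and using $H_{m}v_{\pm}=\pm qv_{\pm}$ gives $-qv_{-}=H_{m}v_{-}=cH_{m}v_{+}=cqv_{+}=qv_{-}$, hence $2qv_{-}=0$. By \eqref{eq:def-q} the standing hypothesis $\lambda_{\ws}\neq 0$ forces $q\neq 0$, and $v_{-}\neq 0$, so this is absurd. Therefore $H_{m}u\in\C u$, which is the assertion.

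The step I expect to demand the most care is the middle reduction: justifying that $M=\langle\omega\rangle v_{+}$ is precisely the kind of simple, Virasoro-cyclic Whittaker module to which the uniqueness statement of \cite{OW,FJK} applies. This is where parts (3) and (4) do the real work, part (3) collapsing the a priori larger $\sla{p}$-module generated by $v_{+}$ down to the Virasoro-cyclic module $\langle\omega\rangle v_{+}$ (so that $H$ contributes nothing beyond the scalar recorded in \eqref{eq:hmv}), and part (4) supplying the simplicity that pins the Whittaker-vector space to one dimension. Once that reduction is secured, the contradiction from the $\pm q$ clash is immediate.
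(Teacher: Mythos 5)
Your proposal proves only part (5) of the lemma, while the statement you were asked to prove is the full five-part lemma; parts (1)--(4) are taken entirely as given, and they are where the real work lies. Concretely, what is missing: part (1) --- that $s$ must be odd, that $H_{i}u=0$ for $i>m$, that $H_{m}^{2}u=\frac{(4p)^{2p-1}}{((2p-1)!)^{2}}\lambda_{s}^{2p-1}u$, and that the submodule generated by $u$ is $\langle\omega\rangle(\C u+\C H_{m}u)$ --- is proved in the paper by applying the singular-vector relation \eqref{eq:relation-wH} of Lemma \ref{lemma:singular-vector} at modes $n+s+2$ to obtain a recursion that kills $H_{n}u$ modulo $\sum_{j\geq n+1}\langle\omega\rangle H_{j}u$ unless $n=m$, combined with simplicity of $\sla{p}$ (\cite[Theorem 4.3]{AM1}) and \cite[Proposition 11.9]{DL} to guarantee a top nonvanishing mode of $H$ on $u$, and with the relation \eqref{eqref:H-1H} of Lemma \ref{lemma:re-HH} to evaluate $H_{m}^{2}u$. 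Part (3)'s inductive determination of the scalars $\mu_{i}$ uses \eqref{eq:relation-wH} again, and part (4) rests on \cite[Corollary 4.2]{OW} and \cite[Theorem 7]{LGZ}. None of this appears in your proposal, yet everything you manipulate --- the elements $v_{\pm}$, the scalar $q$ with $q\neq 0$, the truncation $H_{i}v_{\pm}=0$, the identification of the submodule generated by $v_{+}$ with $\langle\omega\rangle v_{+}$ --- depends on it. A proof of the lemma must establish parts (1)--(4), not merely cite them.

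For part (5) itself, your argument is correct and is essentially identical to the paper's: assume $H_{m}u\notin\C u$, use part (2) to produce $v_{\pm}=H_{m}u\pm qu$ with $H_{m}v_{\pm}=\pm qv_{\pm}$, use part (3) and simplicity of $M$ to conclude $M=\langle\omega\rangle v_{+}=\langle\omega\rangle v_{-}$, and then invoke uniqueness of Whittaker vectors of a fixed type (\cite[Proposition 3.2]{OW}, \cite[Theorem 2.3]{FJK}) to force $v_{-}\in\C^{\times}v_{+}$, which clashes with the opposite $H_{m}$-eigenvalues since \eqref{eq:def-q} and $\lambda_{s}\neq 0$ give $q\neq 0$. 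One cosmetic remark: your appeal to part (4) to certify that $\langle\omega\rangle v_{+}$ is simple is superfluous; once part (3) identifies the submodule generated by $v_{+}$ with $\langle\omega\rangle v_{+}$, simplicity of $M$ alone already yields $M=\langle\omega\rangle v_{+}$, which is all the uniqueness argument needs.
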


\begin{proof}
\begin{enumerate}
\item
For $n\in\Z$, by \eqref{eq:relation-wH} we have
\begin{align}
\label{eq:oH-induction}
0
&=(\dfrac{2p}{3}H_{-4}\vac-\omega_{-1}H_{-2}\vac+\dfrac{2p-1}{2}\omega_{-2}H)_{\wn+\ws+2}u\nonumber\\
&=\dfrac{2p}{3}\binom{-n-\ws}{3}H_{\wn+\ws-1}u
-\sum_{
\begin{subarray}{l}i,j\in\Z\\ i+j=\wn+\ws\end{subarray}}(-j-1)\nor \omega_{i}H_{j}\nor u\nonumber\\
&\quad{}+\dfrac{2p-1}{2}\sum_{
\begin{subarray}{l}i,j\in\Z\\ i+j=\wn+\ws\end{subarray}}(-i-1)\nor\omega_{i}H_{j}\nor u\nonumber\\
&\equiv
-(-n-1)H_{\wn}\lambda_{\ws}u+\dfrac{2p-1}{2}(-\ws-1)H_{\wn}\lambda_{\ws}u
\pmod{\sum_{j\geq n+1}\langle \omega \rangle H_{j}u}.\nonumber\\
&=\lambda_{\ws}(n+1-\dfrac{(\ws+1)(2\wp-1)}{2})H_{\wn}u.
\end{align}
Here $\nor \omega_{i}H_{j}\nor$ is defined to be
\begin{align}
\nor \omega_{i}H_{j}\nor&=
\left\{
\begin{array}{ll}
H_{j}\omega_{i}&\mbox{if }i\geq 0,\\
\omega_{i}H_{j}&\mbox{if }i<0.
\end{array}
\right.
\end{align}
Since $\sla{p}$ is simple by \cite[Theorem 4.3]{AM1}, it follows from \cite[Proposition 11.9]{DL} that
there exists $\wm^{\prime}\in\Z$ such that $H_{\wm^{\prime}}\lw\neq 0$ and $H_{\wj}\lw=0$ for all $\wj>\wm^{\prime}$.
It follows from \eqref{eq:oH-induction} that $\ws$ is an odd integer and $\wm^{\prime}=(\ws+1)(2\wp-1)/2-1=m$.
By \eqref{eq:lie-oj} and \eqref{eq:oH-induction}, an inductive argument shows that
\begin{align}
\label{eq:Jinomega}
H_{n}\lw\in \langle \omega \rangle(\C\lw+\C H_{m}\lw)\mbox{ for all }n\in\Z 
\end{align}
and therefore 
the submodule of $M$ generated by $\lw$ is equal to $\langle \omega\rangle (\C\lw+\C H_{m}\lw)$.
By using Lemma \ref{lemma:re-HH}, the same argument as in the proof of Lemma \ref{lemma:re-HH}
shows that 
\begin{align}
\label{eq:hm2u}
H_{\wm}^2\lw&=\dfrac{(4p)^{2p-1}}{((2p-1)!)^2}\lambda_{s}^{2p-1}\lw.
\end{align}
\item The assertion follows from
\eqref{eq:lie-oj} and \eqref{eq:hm2}.
\item
Equation  \eqref{eq:hmupmu} follows from \eqref{eq:hm2}.
By \eqref{eq:lie-oj} and \eqref{eq:relation-wH}, we have
\begin{align}
0
&=(\dfrac{3}{2p}H_{-4}\vac-\omega_{-1}H_{-2}\vac+\dfrac{2p-1}{2}\omega_{-2}H)_{\wot+\wm+2}u\nonumber\\
&=\dfrac{3}{2p}
\binom{-\wm-\wot}{3}H_{\wot+\wm-1}u
-
\sum_{
\begin{subarray}{l}\wot\leq i\leq \ws\end{subarray}}(-(\wot+\wm-i)-1)H_{\wot+\wm-i}\omega_{i} u\nonumber\\
&\quad{}+\dfrac{2p-1}{2}\sum_{
\begin{subarray}{l}\wot\leq i\leq \ws\end{subarray}}(-i-1)H_{\wot+\wm-i}\omega_{i} u\nonumber\\
&=\sum_{
\begin{subarray}{l}\wot\leq i\leq \ws\end{subarray}}(\wot+\wm-i+1-\dfrac{(i+1)(2\wp-1)}{2})H_{\wot+\wm-i}\omega_{i}u\nonumber\\
&=(\wot+\wm-\ws+1-\dfrac{(\ws+1)(2\wp-1)}{2})H_{\wot+\wm-\ws}\omega_{\ws}u\nonumber\\
&\quad{}+\sum_{i=\wot}^{\ws-1}(\wot+\wm-i+1-\dfrac{(i+1)(2\wp-1)}{2})H_{\wot+\wm-i}\omega_{i}u\nonumber\\
&=(\wot-\ws)H_{\wot+\wm-\ws}\lambda_{\ws}u\nonumber\\
&\quad{}+\sum_{
\begin{subarray}{l}i=\wot\end{subarray}}^{\ws-1}(\wot+\wm-i+1-\dfrac{(i+1)(2\wp-1)}{2})H_{\wot+\wm-i}\lambda_{i}u.
\end{align}
Thus, an inductive argument on $\wot=\ws,\ws-1,\ldots$ shows \eqref{eq:Jw-3}.
It follows from (1) that
the submodule of $M$ generated by $u$ is equal to $\langle \omega\rangle u$.

\item Since $u$ is a Whittaker vector of type $(\lambda_i)_{i=(s+1)/2}^{s}$ for $\omega$,
the assertion follows from 
(3), \cite[Corollary 4.2]{OW}, and \cite[Theorem 7]{LGZ}.
\item 
Suppose  $H_{m}u\not\in \C$. Let $v_{\pm}\in M$ be two nonzero vectors defined in (2).
We note that $v_{\pm}$ are Whittaker vectors for $\omega$.
Since $M$ is simple we have $M=\langle\omega \rangle v_{+}=\langle\omega \rangle v_{-}$ by (3).
Since $v_{\pm}$ are Whittaker vectors of type $(\lambda_i)_{i=(\ws+1)/2}^{\ws}$ for $\omega$ in $M$,
$v_{+}$ is a non-zero scalar multiple of  $v_{-}$ by \cite[Proposition 3.2]{OW} and \cite[Theorem 2.3]{FJK}.
This contradicts to \eqref{eq:hmv}.
\end{enumerate}
\end{proof}

Now we give a proof of Theorem \ref{theorem:classification-module}.

\begin{proof}[(Proof of Theorem \ref{theorem:classification-module})]
The following argument is almost the same as in the proof of \cite[Theorem 1.1]{T1}.
Let $\ws$ be an odd integer with $\ws\geq 2$,
$\lambda=(\lambda_{(\ws+1)/2},\lambda_{(\ws+1)/2+1},\ldots,\lambda_{\ws})
\in \C^{(\ws-1)/2}\times\C^{\times}$, $m\in\Z$ defined by \eqref{eq:define-m}, and $q\in\C$ which satisfies \eqref{eq:def-q-1}. 
Taking a quotient space of the tensor algebra of $\sla{p}\otimes \C[t,t^{-1}]$ by the two sided ideal
generated by the Borcherds identity, 
$H\otimes t^i\ (i>\wm)$, $H\otimes t^{m}-q(\vac \otimes 1)$, $\omega\otimes t^{i}\ (i>\ws)$,
and $\omega\otimes t^{i}-\lambda_i(\vac \otimes 1)\ ((\ws+1)/2\leq i\leq \ws)$, we obtain a pair $(N(\lambda,q),\ulw(\lambda,q))$ of a weak $\sla{p}$-module 
$N(\lambda,q)$ and a Whittaker vector $\ulw(\lambda,q)\in N(\lambda,q)$ of type $\ul{\lambda}$ for $\omega$ such that
$Y_{N(\lambda,q)}(H,x)\ulw(\lambda,q)\in x^{-\wm-1}N(\lambda,q)[[x]]$ and $H_m\ulw(\lambda,q)=q\ulw(\lambda,q)$ with the following universal property:
for any pair $(U,u)$ of a weak $\sla{p}$-module $U$ and a Whittaker vector $u\in U$ of type $\ul{\lambda}$
such that $Y_{U}(H,x)u\in x^{-\wm-1}U[[x]]$ and $H_{m}u=qu$,
there exists a unique weak $\sla{p}$-module homomorphism $N(\lambda,q)\rightarrow U$ which maps $\ulw(\lambda,q)$ to $u$.
The weak module $N(\lambda,q)$ is not zero by Corollary \ref{corollray:Mzeta}
and simple  by Lemma \ref{lemma:gen-omega} (4).

Let $\module$ be a simple weak $\sla{p}$-module with a Whittaker vector $\lw$
of type $\lambda$ for $\omega$.
By Lemma \ref{lemma:gen-omega} (3) and (5), $\module$ is isomorphic to a quotient weak module of
$N(\lambda,q)$ or $N(\lambda,-q)$ and, moreover, $N(\lambda,q)\cong M$ or $N(\lambda,-q)\cong M$ since $N(\lambda,\pm q)$ are simple.
Thus,  $M$ is isomorphic to one of the weak $\sla{p}$-modules listed in Theorem \ref{theorem:classification-module}
by Corollary \ref{corollray:Mzeta}.
The proof is complete.
\end{proof}


\begin{thebibliography}{99}
\bibitem{A}
D. Adamovi\'c, 
Classification of irreducible modules of certain subalgebras of free boson vertex algebra,
{\em J. Algebra} {\bfseries 270} (2003), 115--132. 
\bibitem{ALZ}
D. Adamovi\'c, R. L\"u, and. K. Zhao, 
Whittaker modules for the affine Lie algebra $A_{1}^{(1)}$,
{\em Adv. Math.} {\bfseries 289} (2016), 438--479. 
\bibitem{AM1}
D. Adamovi\'c and A. Milas,
Logarithmic intertwining operators and ${\mathcal W}(2,2p-1)$ algebras,
{\em J. Math. Phys.} {\bfseries 48} (2007), 073503
\bibitem{AM2}
D. Adamovi\'c and A. Milas,
On the triplet vertex algebra ${\sW}(p)$,
{\em Adv. Math.} {\bfseries 217} (2008), 2664--2699.
\bibitem{AP}
D. Arnal and G. Pinczon,
On algebraically irreducible representations of the Lie algebra $sl(2)$,
{\em J. Math. Phys.} {\bfseries 15} (1974), 350--359.
\bibitem{B}
R. Borcherds, 
Vertex algebras, Kac-Moody algebras, and the Monster, 
{\em Proc. Nat. Acad. Sci. U.S.A.} {\bfseries 83} (1986), 3068--3071.
\bibitem{BCNM}
P. Bouwknegt, A. Ceresole, P. van Nieuwenhuizen, and J. McCarthy, 
Extended Sugawara construction for the superalgebras ${\rm SU}(M+1|N+1)$. II. The third-order Casimir algebra,
{\em Phys. Rev. D} {\bfseries 40} (1989), 415--421.
\bibitem{BM}
K. Bringmann and A. Milas, 
${\mathcal W}$-algebras, false theta functions and quantum modular forms, I,
I.M.R.N. {\bfseries 21} (2015), 11351--11387. 
\bibitem{CM}
T. Creutzig and A. Milas,
The false theta functions and the Verlinde formula,
{\em  Adv. Math.} {\bfseries 262} (2014) 520--545.
\bibitem{CMP}
T. Creutzig, A. Milas, and M. Rupert,
Logarithmic link invariants of $\overline{U}^{H}_{q}({\mathfrak sl}_2)$ and asymptotic dimensions of singlet vertex algebras,
 arXiv:1605.05634.
\bibitem{DL}
C. Dong and J. Lepowsky, \emph{Generalized vertex algebras and relative vertex operators}, 
Progress in Mathematics {\bfseries 112}, Birkhauser Boston, Inc., Boston, MA, 1993.
\bibitem{E}
P. Etingof, 
Whittaker functions on quantum groups and $q$-deformed Toda operators,
in \emph{Differential Topology, Infinite-Dimensional Lie Algebras, and Applications}, 
Amer. Math. Soc. Transl. Ser. 2 {\bfseries 194}, Amer. Math. Soc., Providence, 1999, 9--25.
\bibitem{FJK}
E. Feli\'nska, Z. Jask\'olski, and M. Koszto\l owicz,
Whittaker pairs for the Virasoro algebra and the Gaiotto-Bonelli-Maruyoshi-Tanzini states,
{\em J. Math. Phys.} {\bfseries 53} (2012), no. 3, 033504.\\
Erratum,  {\em J. Math. Phys.} {\bfseries 53} (2012), no. 12, 129902.
\bibitem{FLM}
I. B. Frenkel, J. Lepowsky and A. Meurman, 
\emph{Vertex Operator Algebras and the Monster}, Pure and Applied Math., Vol. {\bfseries 134}, Academic Press, 1988.
\bibitem{G}
D. Gaiotto,
Asymptotically free $\sN = 2$ theories and irregular conformal blocks,
Journal of Physics: Conference Series {\bfseries 462} (2013) 012014
\bibitem{Ka}
H. G. Kausch, 
Extended conformal algebras generated by a multiplet of primary fields,
{\em Phys. Lett. B} {\bfseries 259} (1991), 448--455. 
\bibitem{K1}
B. Kostant, 
On Whittaker vectors and representation theory,
{\em Invent. Math.} {\bfseries 48} (1978), 101--184. 
\bibitem{K2}
B. Kostant, 
The solution to a generalized Toda lattice and representation theory,
{\em  Adv. Math.} {\bfseries 34 } (1979), 195--338. 
\bibitem{LL}
J. Lepowsky and H. S. Li, 
\emph{Introduction to Vertex Operator
Algebras and their Representations}, Progress in Mathematics \textbf{227}, Birkhauser Boston, Inc., Boston, MA, 2004.
\bibitem{Li1}
H. S. Li,
Local systems of vertex operators, vertex superalgebras and modules,
\emph{J. Pure Appl. Algebra} {\bfseries 109} (1996), 143--195. 
\bibitem{LGZ}
R. L\"u, X. Guo, K. Zhao, Irreducible modules over the Virasoro algebra,
{\em Doc. Math.} {\bfseries 16} (2011), 709--721.
\bibitem{OW}
M. Ondrus and E. Wiesner, 
Whittaker modules for the Virasoro algebra,
{\em J. Algebra Appl.} {\bfseries 8} (2009), 363--377. 
\bibitem{S}
A. Sevostyanov, 
Quantum deformation of Whittaker modules and the Toda lattice,
{\em Duke Math. J.} {\bfseries 105} (2000), 211--238. 
\bibitem{T1}
K. Tanabe,
Simple weak modules for the
fixed point subalgebra of the Heisenberg vertex operator algebra of rank $1$ by an automorphism of order $2$ and Whittaker vectors, 
to appear in Proceedings of the AMS.
\bibitem{W1}
W. Wang,
$W_{1+\infty}$ algebra, $W_3$ algebra, and Friedan-Martinec-Shenker bosonization. 
{\em Comm. Math. Phys.} {\bfseries 195} (1998), 95--111. 
\bibitem{W2}
W. Wang, 
Classification of irreducible modules of $W_3$ algebra with $c=-2$,
{\em Comm. Math. Phys.} {\bfseries 195} (1998), 113--128.
\bibitem{Z}
Y. Zhu,
Modular invariance of characters of vertex operator algebras,
{\em J. Amer. Math. Soc.} {\bf9} (1996), 237--302.

\end{thebibliography}
\end{document}